\documentclass[12pt]{article} 
\usepackage{amsmath, amsfonts, amssymb}
\usepackage{mathrsfs}
\usepackage{theorem}
\usepackage{bm}
\usepackage{graphicx}
\usepackage{subcaption}
\usepackage{float}
\usepackage{bm}
\RequirePackage[colorlinks,citecolor=blue,urlcolor=blue,bookmarksopen]{hyperref}
\usepackage[utf8]{inputenc} 
\usepackage[T1]{fontenc}
\newtheorem{mythm}{Theorem}[section]
\newtheorem{myprop}[mythm]{Proposition}
\newtheorem{mylem}[mythm]{Lemma}

{\newtheorem{myrem}[mythm]{Remark}}
{\newtheorem{myexam}{Example}[section]}

\def\R{\mathbb R}

\def\N{\mathbb N}
\def\B{\mathscr B}

\def\F{\mathscr F}

\def\C{\mathscr C}
\def\d{\text{\rm{d}}}
\def\E{\mathbb E}
\def\p{\mathbb P}

\def\la{\langle}
\def\raa{\rangle}
\def\La{\Lambda}

\def\e{\mathrm{e}}

\def\D{\mathscr D}

\def\S{\mathcal S}

\def\pb{\mathscr{P}}

\def\wt{\widetilde}

\def\W{\mathbb{W}}

\newenvironment{proof}{{\noindent\it Proof.}\ }{\hfill $\square$\par}

\numberwithin{equation}{section}

\allowdisplaybreaks

\begin{document}
	
	\title{Invariant probability measures for path-dependent regime-switching processes}
	
	\author{Jianrui Li$^{a}$, \   \ Jinghai Shao$^{b}$%
		\thanks{a: School of Mathematical Sciences, Tiangong University, Tianjin 300387, China. \\
			\hspace*{1.52em}
			b: Center for Applied Mathematics, Tianjin University, Tianjin 300072, China. \\
			\  Emails: jianruili@tju.edu.cn (Li); shaojh@tju.edu.cn (Shao)
	}}
	
	\maketitle
	
\begin{abstract}
We consider stochastic functional differential equations with Markovian regime-switching on an (infinitely) countable state space. Via establishing the Feller property and contraction in $L_1$-Wasserstein distance of the semigroup associated with the segment process, we show the existence and uniqueness of  invariant probability measure. In order to cope with switching processes on an infinite state space, we develop a truncation method based on a new construction of order preserving coupling for Markov chains to generalize the estimation of exponential functionals for Markov chains with infinite states. The existing sharp estimation is only valid for Markov chains on a finite state space, as it fundamentally relies on the Perron-Frobenius theorem.
	\end{abstract}
	
	\noindent\textbf{Keywords}:Invariant probability measures, Regime-switching, Path-dependent, Infinite state space
	
	\noindent Mathematics Subject Classification  (2020): 93E20, 60J60, 35G50

	\section{Introduction}

	This work concerns the existence and uniqueness of invariant probability measure for the stochastic functional differential equation (SFDE) with Markovian regime-switching. Let $\tau_0>0$.  $\C=C([-\tau_0,0];\R^d)$ is the collection of continuous functions from $[-\tau_0,0]$ to $\R^d$, and  $\D=\D([-\tau_0,0];\S)$ is the set of functions $g:[-\tau_0,0]\to \S$ which are right continuous with left  limits.  Consider the following SFDE:
	\begin{equation}\label{a-1}
		\d X(t)=b(X_t,\La_t)\d t+\sigma(X_t,\La_t)\d W(t),\quad t>0 
	\end{equation}
	with initial value $X_0=\xi\in \C$, $\La_0=\ell\in \D$, where $b:\C\times\D\to \R^d$, $\sigma:\C\times\D\to \R^{d\times d}$ are measurable functions, $(W(t))$ is a $d$-dimension Brownian motion, $(\La(t))$ is a continuous-time Markov chain on a discrete state space $\S=\{1,2,\ldots,N\}$ with $N\leq \infty$.  The segment process $X_t\in \C$ and $\La_t\in \D$ are defined as 
	\begin{equation}\label{a-3}
		X_t(r)=X(t+r),  \quad \La_t(r)=\La( t+r ),\quad  r\in [-\tau_0,0], \ t\geq 0.
	\end{equation} 
	The evolution of $X(t)$ depends on the behavior of the process during the interval $[t-\tau_0, t]$. Consequently, $X(t)$ no longer possesses the Markov property. However, if a segment  of the process of length $\tau_0$ is extracted  and treated as a process on the infinite dimensional path space, the augmented process $X_t$ defined above exhibits the Markov property; see the details in  Proposition \ref{prop-2.2} below. 
	
	There is a rather comprehensive literature on the  stability and ergodicity of SFDEs with regime-switching in the form
	\begin{equation}\label{a-2}
		\d X(t)=b(X_t,\La(t))\d t+\sigma(X_t,\La(t))\d W(t), \quad t>0,
	\end{equation}
	with initial value $(X_0,\La(0))=(\xi,i)\in \C\times\S$. The inclusion of segment processes $(X_t,\La_t)$ rather than the processes $(X_t,\La(t))$ in the equation \eqref{a-1} renders this process a suitable model for a broader range of applications.
	For instance, in the study of stabilization of regime-switching processes, \cite{Mao} raised an interesting method to stabilize the studied system based on discrete time observations
	\[\d X(t)=\big(b(X(t),\La(t))-u(X([t/\tau_0]\tau_0),\La(t))\big)\d t+\sigma(X(t),\La(t))\d B(t)\]
	in order to ease operation and reduce cost. This is in the form of SFDE \eqref{a-2}. To enhance practical operability, it is necessary to use the discrete-time observations of $(\La(t))$ to stabilize the studied system. As being done in e.g. \cite{Sh17,SZLM}, one needs to study the stability of the following SFDE:
	\[\d X(t)=\big(b(X(t),\La(t))-u(X([t/\tau_0]\tau_0),\La([t/\tau_0]\tau_0))\big)\d t+\sigma(X(t),\La(t))\d B(t),\]
	which turns to be in the form of SDE \eqref{a-1}.  This underscores the necessity of extending the framework from SFDE \eqref{a-2} to \eqref{a-1}.

	The generalization from \eqref{a-2} to \eqref{a-1} introduces significant mathematical challenges in handling (as detailed in next paragraph).    There is limited work on the ergodicity of SFDE \eqref{a-1}.  For instance, \cite{BSY23} and \cite{NNN} studied the existence and uniqueness of invariant probability measure for the process $(X_t,\La(t))$ and its Euler-Maruyama's approximation. Such investigation has been extended in \cite{SWW,WWY,WWZ} to the situation with infinite delay together with further investigation on limit theorem and large deviations. The aforementioned works all considered the case of a finite state space $\S$. This restriction is essentially due to the application of the Perron-Frobenius theorem to estimate the expectation of   exponential functionals of continuous-time Markov chains established in \cite{BGM}. This work shall overcome the well-known challenge on the finiteness of $\S$ in the study of long time behavior of stochastic processes with regime-switching. 
	
	In contrast to \eqref{a-2}, to study the SFDE \eqref{a-1} including the delay term of the switching process $(\La(t))$ is quite necessary. For instance, inspired by the work \cite{Mao}, many works studied the stabilization problem of an equilibrium point for SDEs with regime-switching by feedback control based on discrete-time observations of $(X(t))$ and $(\La(t))$ such as \cite{Sh17,SZLM,YLL} and references therein.   \cite{LMMY} studied the stabilization problem by delay feedback control based on discrete-time observations of $(X(t))$ and $(\La(t))$. In addition, the stability in distribution for a stochastic system is an important topic in many application fields (cf. e.g. \cite{BB92,YM03,YZM}).  Recently, together with Zhao, we have studied in \cite{LSZ} the stability in distribution for regime-switching diffusion processes with feedback control based on discrete-time observations. Two methods were used in \cite{LSZ}, and both of them depend highly on the special structure of the discrete-time observations. In this work, we  shall further advance the research on this issue by considering more general cases.

	According to the study on the ergodicity of SFDEs such as in \cite{BM05,BS17,EGS}, a crucial condition to ensure the existence of invariant probability measure is that the drift coefficient must be dissipative w.r.t. the current state. A little precisely,
	for the SFDE
	\[\d X(t)=f(X_t)\d t+g(X_t)\d W(t),\]
	the drift should satisfy 
	\[\la \xi(0), f(\xi)\raa \leq -c |\xi(0)|^p,\quad \xi\in \C \]
	for some $p\in (0,1]$, $c>0$. In the presence of regime-switching, it is not necessary for the drift coefficients to be dissipative in every fixed environment, which has already been observed in \cite{BSY23,NNN} via concrete examples. However, based on the results of this work, we observe that: the delay term $\La_t$ in the drift $b(\cdot,\cdot)$ differs fundamentally in its effect from the delay term $X_t$ in $b(\cdot,\cdot)$; see condition (H1) and Remark \ref{rem-1} (ii). Roughly speaking,  the drift $b(\cdot,\cdot)$ needs to be dissipative in the current state $X(t)$ of the segment process $(X_t)$, but the average principle dominates the role of the process $(\La(t))$ in the study of invariant probability measure of $(X_t,\La_t)$. 
	
	Compared to the existing literature on SFDEs with regime-switching, we shall overcome two main difficulties. First, in view of the fact that the coefficients $b$ and $\sigma$ depend on $\La_t\in \D$ instead of $\La(t)\in \S$, only the segment process $(X_t,\La_t)$ on the path space owns the Markov property, not on the finite dimensional space, and the complexity of the infinite dimensional space $\C\times\D$ significantly increases the difficulty of this problem. In particular, the complexity of the topology of $\D$ makes it difficult to study the Feller property of the corresponding semigroup $P_t$ and its convergence. 
	
	Second, we shall overcome the restriction on the finiteness of  the state space $\S$ and study the invariant probability measure for SFDEs with regime-switching on an infinite state space. We truncate a Markov chain with infinite states into a new one possessing only finite states through constructing a suitable coupling process in terms of a Poisson random measure. This helps us to develop the estimates on the exponential functionals of Markov chains on a finite state space established in \cite{BGM} to the infinite state space case. The estimation obtained in \cite{BGM} is sharp since its lower and upper bounds differ only by a constant factor. The method of \cite{BGM} depends on the Perron-Frobenius theorem, which is only valid for matrices of finite order. 
	
	Moreover, this work not only shows that the existence of the unique invariant probability measure for the segment process, but also provides explicit estimate of the convergence rate in the $L_1$-Wasserstein distance. In view of the sharp estimate in Lemma \ref{lem-2.4} for the exponential functionals of Markov  chains, it is quite reasonable to introduce the largest eigenvalue  in Theorem \ref{main-1} to control the convergence rate.   
	
	This work is organized as follows. In Section 2, we present the basic set up of our model, and introduce the assumptions on the coefficients and the fundamental properties of the solutions to SFDE \eqref{a-1}. Section 3 studies the existence and uniqueness of invariant probability measure for SFDE \eqref{a-1} with regime-switching on a finite state space. SFDEs \eqref{a-1} with regime-switching on an infinite state space are studied in Section 4.

	\section{Framework and basic properties}
	
	In this section, we shall introduce the fundamental setting of this work, and present the conditions on the coefficients of the studied SFDE  \eqref{a-1} with regime-switching. We want to provide the explicit estimate of the convergence rate of the segment process to its invariant probability measure in this work. Hence, the well studied coupling method for the continuous-time Markov chains will be introduced. 
	\subsection{Basic setup}
	Let us introduce some notations used below. Let $\S=\{1,2,\ldots, N\}$ with $N\leq \infty$. Let $\tau_0>0$ be fixed in this work. $\C=C([-\tau_0,0];\R^d)$ denotes the collection of continuous functions $f:[-\tau_0,0]\to \R^d$, which is endowed with the norm 
	\[\|f\|_\infty=\sup_{t\in [-\tau_0,0]} |f(t)|.\]
	Let $\D=\D([-\tau_0,0];\S)$ be the set of functions $g:[-\tau_0,0]\to \S$ which are right continuous with left  limits. 
	
	
	Let $(\Omega_1,\F_1,\p_1)$ and $(\Omega_2,\F_2,\p_2)$ be two probability spaces. 
	Put
	\begin{equation}\label{b-0}
		(\Omega,\F,\p)=\big(\Omega_1\times\Omega_2, \F_1\times\F_2,\p_1\times \p_2\big).
	\end{equation}
	Let $(W(t))$ be a $d$-dimension standard Brownian motion on $\Omega_1$, and $(\La(t))$ is a continuous-time Markov chain on $\Omega_2$. Then, under the probability measure $\p=\p_1\times\p_2$, two processes $(W(t))$ and $(\La(t))$ are mutually independent. 
	
	Recall the SFDE \eqref{a-1} with Markovian regime-switching to be studied in the sequel:
	\begin{equation*}\label{b-1}
		\d X(t)=b(X_t,\La_t)\d t+\sigma(X_t,\La_t)\d W(t), \quad t>0,
	\end{equation*}
	with initial value $(X_0,\La_0)=(\xi,\ell)\in \C\times\D$. 
	In the existing literature, $(X_t)_{t\geq 0}$ is usually called the segment process associated with the SFDE \eqref{a-1}. $(\La(t))$ is a continuous-time Markov chain on the countable state space $\S$, whose transition rate matrix is given by $Q=(q_{ij})_{i,j\in \S}$. Therefore, this process $\La(t)$ for $t>0$ is only determined by $\La(0)=\ell(0)$ independent of $\ell(r)$ for $r\in [-\tau_0,0)$. 
	

	
	In order to provide explicit estimate on the convergence rate of the process $(\La(t))$ to its stationary distribution, we would like to use the coupling method, which has been extensively studied (cf. e.g. \cite{Chen}). Let $\wt Q$ be a coupling operator of $Q$ and itself, that is, $\wt Q$ is an operator on $\S\times \S$ satisfying
	\[\wt Q f(i,j)= Q f(\cdot,  j)(i)+ Q f(i, \cdot)(j),\quad f\in \B(\S\times\S), \]
	where $\wt Q f(i,j)=\sum_{(k,m)\in \S\times\S} \tilde q( i,j; k,m )\big(f(k,m)-f(i,j)\big)$ and $Qg(i)=\sum_{j\in\S} q_{ij}(g(j)-g(i))$ for $g\in \B(\S)$. 
	
	Here we provide two examples of the coupling operators $\wt Q$.
	\begin{myexam}\label{coup-1}
		Define $\wt Q=(\tilde q( i,j; m,n ))$ as follows:
		\begin{equation*}
			\begin{cases}
				\tilde q(i,j;m,n)=(q_{ik}-q_{jk})^+, &m=k,n=j,k\neq i,\\
				\tilde q(i,j;m,n)=(q_{jk}-q_{ik})^+, &m=i,n=k,k\neq j,\\
				\tilde q(i,j;m,n)=q_{ik}\wedge q_{jk}, &m=k, n=k,(k,k)\neq (i,j),\\
				\tilde q(i,j;m,n)=0, &\text{others, if $(m,n)\neq (i,j)$},
			\end{cases}
		\end{equation*}
		and $\tilde q(i,j;i,j)=-\sum_{(m,n)\neq (i,j)} \tilde q(i,j;m,n)$. 
		This coupling is called the basic coupling of $Q$ and itself. 
	\end{myexam}
	
	\begin{myexam}\label{coup-2}
		The classical coupling $\wt Q$ of $Q$ and itself is defined by 
		\begin{equation*}
			\begin{cases}
				\tilde q(i,j;m,n)=q_{im} ,&i\neq j,m\neq i,n= j,\\
				\tilde q(i,j;m,n)=q_{jn}, &i\neq j,m=i,n\neq j,\\
				\tilde q(i,j;m,n)=q_{im}, &i=j, m=n\neq i,\\
				\tilde q(i,j;m,n)=0, & \text{others, if $(m,n)\neq (i,j)$},
			\end{cases}
		\end{equation*}and $\tilde q(i,j;i,j)=-\sum_{(m,n)\neq (i,j)} \tilde q(i,j;m,n)$. 
	\end{myexam}
	
	Associated with the coupling operator $\wt Q$, there is a coupling process $(\La(t),\La'(t))$ on $\S\times \S$. By the definitions of the basic coupling and the classical coupling we can see that once the process $(\La(t))$ meets $(\La'(t))$ then they will move together. Precisely, let 
	\begin{equation}\label{b-2} 
		T=\inf\{t\geq 0; \La(t)=\La'(t)\},
	\end{equation}
	which is called \emph{the coupling time} of $(\La(t),\La'(t))$. For the classical coupling and the basic coupling, if $T<\infty$, then $ \La(t)=\La'(t)$ for $t\geq T$. The couplings of Markov chains have been extensively studied, and we refer the readers to the monograph \cite{Chen} for more introduction on this topic. 
	
	The conditions on the process $(\La(t))$ are collected below. 
	\begin{itemize}
		\item[(A1)] $Q=(q_{ij})$ is an irreducible, conservative transition rate matrix satisfying
		\[K_0:=\sup_{i\in\S} q_i<\infty, \quad \text{where $q_i:=-q_{ii}=\sum_{j\neq i} q_{ij}$}.\]
		\item[(A2)] 
		Let $\wt Q$ be a coupling of $Q$ and itself such that the associated process $(\La(t),\La'(t))$ moves together after the coupling time.  There exists a function $V:\S\times \S\to [0,\infty)$ such that $\|V\|_{\infty}:=\sup_{i,j\in\S} V(i,j) <\infty$,
		\begin{equation}\label{b-3} 
			\wt Q V(i,j)\leq -1,\quad i,j\in\S,\ i\neq j.
		\end{equation}
	\end{itemize} 
	\begin{myrem}\label{rem-0}
		Condition (A1) ensures the existence of the Markov process $(\La(t))$ with transition rate matrix $Q=(q_{ij})$. As we shall study the ergodicity of SFDE with regime-switching $(X(t),\La(t))$, it is natural  to assume first that $(\La(t))$ itself is ergodic. Condition (A2) can ensure that the Markov chain $(\La(t))$ is ergodic, and can provide explicit estimate of the convergence rate via the estimate
		of $\E \e^{\lambda T}$ for some $\lambda >0$ for the coupling time $T$. This will be used to provide explicit estimate of the convergence rate of $(X_t,\La_t)$ to its invariant probability measure. 
	\end{myrem}

	To be more precise, 
	when $\S$ is a finite state space,  we can take $\wt Q$ to be the classical coupling given in Example \ref{coup-2}, then the associated coupling process $(\La(t),\La'(t))$ must be  ergodic under (A1) and hence there is $\lambda>0$ such that 
	$\E[ \e^{\lambda T}]<\infty$. 
	Refer to  \cite{And} or \cite[Chapter 5]{Chen05} for more details. 
	
	Now let us explain how to get the estimate of $\E[\e^{\lambda T}]$ via \eqref{b-3}. Indeed, by \eqref{b-3}, for any $0<\lambda<\|V\|_\infty^{-1}$,
	\begin{align*}
		\E\Big[ \int_0^{t\wedge T}\!\e^{\lambda s}\d s\Big]&\leq V(i,j)+\E\Big[ \int_0^{t\wedge T} \lambda \e^{\lambda s} V(\La(s), \La'(s))\d s\Big],\\
		\lambda^{-1}\E\Big[\e^{\lambda(t\wedge T)}-1\Big]&\leq V(i,j)+\|V\|_{\infty} \E\Big[\e^{\lambda(t\wedge T)}-1\Big]. 
	\end{align*}
	Letting $t\to\infty$, we obtain that 
	\begin{equation}\label{b-4}
		\E\big[\e^{\lambda T}\big]\leq \frac{1}{1-\lambda \|V\|_\infty}<\infty.
	\end{equation}

	Let us introduce the Skorokhod metric on $\D$. The discrete metric on $\S$ is denoted by $\vartheta(i,j)$, i.e. $\vartheta(i,j)=1$ if $i\neq j$; $\vartheta(i,j)=0$ if $i=j$. Let 
	\begin{equation}\label{Theta}
		\begin{split}
			\Theta=\Big\{\theta:[-\tau_0,0]\to [-\tau_0,0]; \ &\text{increasing Lipschitz continuous functions satisfying}\\
			&  \gamma(\theta):=\sup_{0\geq s>t\geq -\tau_0} \Big|\log \frac{\theta(s)-\theta(t)}{s-t}\Big|<\infty\Big\}.
		\end{split}
	\end{equation}
	For any $\ell,\tilde \ell\in \D$, define
	\begin{equation}\label{b-5}
		\rho(\ell,\tilde \ell)=\inf_{\theta\in \Theta} \max\big\{\gamma(\theta), \sup_{-\tau_0\leq t\leq 0} \vartheta\big(\ell(t), \tilde \ell(\theta(t))\big)\big\}.
	\end{equation}
	As $\vartheta(i,j)\leq 1$, it is clear that $\rho(\ell,\tilde \ell)\leq 1$ for any $\ell,\tilde \ell\in \D$ by the definition \eqref{b-5}. 
	According to \cite[Chapter 3, Proposition 5.3]{EK86}, for $\ell,\ell_m\in \D$, $\lim_{m\to\infty} \rho(\ell_m,\ell)=0$ if and only if there exist $\theta_m\in \Theta$, $m\geq 1$, such that 
	$\lim_{m\to \infty} \gamma(\theta_m)=0$ and 
	\[\lim_{m\to\infty} \sup_{-\tau_0\leq t\leq 0} \vartheta(\ell_m(t), \ell(\theta_m(t)))=0.
	\] 
	Due to \cite[Chapter 3, Theorem 5.6]{EK86}, $(\D,\rho)$ is a complete metric space.

	\subsection{Wellposedness and the Markov property of the segment processes}
	
	In this part, we shall provide conditions on the coefficients of equation \eqref{a-1} to ensure the wellposedness of the SFDE \eqref{a-1} with regime-switching. Then, we show that the associated segment process $(X_t,\La_t)$ will admit Markov property.
	
	Let us define a metric $d$ on $\C\times \D$ by 
	\begin{equation}\label{b-6}
		d\big((\xi,\ell),(\eta,\tilde \ell )\big)=\|\xi-\eta\|_{\infty} +\rho(\ell, \tilde \ell),\quad (\xi,\ell), (\eta,\tilde \ell)\in \C\times\D.
	\end{equation}
	Then $(\C\times\D, d)$ is a complete metric space.
	Let $\pb(\C\times\D)$ be the space of all probability measures over $\C\times \D$. Let 
	\[\pb_{p,d}(\C\times \D)=\Big\{ \nu\in \pb(\C\times\D);\ \int_{\C\times\D} d\big((\xi,\ell),(o,\ell')\big)^p\nu(\d \xi, \d \ell)<\infty\Big\},\quad p>0,
	\] where $(o,\ell')$ is some fixed point in $\C\times\D$.  
	Correspondingly, we introduce the $L_1$-Wasserstein distance on $\pb(\C\times\D)$: for any $\nu_1,\,\nu_2\in \pb(\C\times\D)$, 
	\[\W_{1}(\nu_1,\nu_2)=\inf_{\pi\in \mathcal{C}(\nu_1,\nu_2)} \Big\{ \int_{(\C\times\D)^2}\!\! d\big((\xi,\ell),(\eta,\tilde \ell)\big)\pi\big((\d \xi,\d \ell),(\d \eta,\d\tilde \ell)\big)\Big\},
	\]
	where $\mathcal{C}(\nu_1,\nu_2)$ stands for the collection of all probability measures on $(\C\times\D)^2$ with marginals $\nu_1$ and $\nu_2$ respectively. Namely, $\pi\in \mathcal{C}(\nu_1,\nu_2)$ if and only if $\pi(\cdot, \C\times\D)=\nu_1(\cdot)$ and $\pi(\C\times \D,\cdot)=\nu_2(\cdot)$. According to the Kantorovich dual representation of the Wasserstein distance $\W_1$ (cf. \cite{Vil}), it holds that 
	\begin{equation}\label{b-7}
		\W_1(\nu_1,\nu_2)=\sup_{F\in \mathrm{Lip}_1(\C\times\D)} \Big\{\int_{\C\times\D} F(\xi,\ell)\nu_1(\d \xi,\d \ell)-\int_{\C\times\D} F(\xi,\ell)\nu_2(\d \xi,\d \ell)
		\Big\},
	\end{equation}
	where $\mathrm{Lip}_1(\C\times\D)$ denotes the set of Lipschitz continuous functions $F$ on $\C\times \D$ satisfying 
	\[\sup_{(\xi,\ell)\neq (\eta,\tilde \ell)} \frac{|F(\xi,\ell)-F(\eta, \tilde \ell)|}{d \big( (\xi,\ell), (\eta, \tilde \ell)\big)}\leq 1.
	\]
	
	Now let us introduce the conditions on the coefficients of the process $(X(t))$.  
	\begin{itemize}
		\item[(H1)] There are constants $\kappa_1>0$, $\epsilon\in [-\tau_0,0]$, $\alpha_i\in \R$, $\beta_i\in \R_+\!:=[0,\infty)$ for $i\in \S$, and a probability measure $\mu$ on $[-\tau_0,0]$ such that 
		\begin{equation}\label{c-1}
			\begin{split}
				&2\la \xi(0)-\eta(0), b(\xi,\ell)-b(\eta,\ell)\raa+ \|\sigma(\xi, \ell)-\sigma(\eta,\ell)\|_{\mathrm{HS}}^2\\
				&\leq \alpha_{\ell(\epsilon)} |\xi(0)-\eta(0)|^2+ \beta_{\ell(\epsilon)}\int_{-\tau_0}^0 \!|\xi(s)-\eta(s)|^2\mu(\d s),\\       
				\|\sigma(\xi,\ell)-& \sigma(\eta,\ell)\|_{\mathrm{HS}}^2\leq \kappa_1 \Big( |\xi(0)-\eta(0)|^2+\int_{-\tau_0}^0|\xi(s)-\eta(s)|^2\mu(\d s) \Big) 
			\end{split}
		\end{equation}
		for $\xi,\eta\in \C,\, \ell\in\D$, where $\|\,\cdot\,\|_{\mathrm{HS}}$ denotes the Hilbert-Schmidt norm. 
		
		\item[(H2)] There is a constant  $\kappa_2>0$ such that 
		\begin{equation}\label{c-2}
			|b(\xi,\ell)|^2 +\! \|\sigma(\xi,\ell)\|_{\mathrm{HS}}^2 \! \leq \kappa_2 \Big(|\xi(0)|^2\!+\! \int_{-\tau_0}^0 \!\! |\xi(s)|^2\mu(\d s)\Big),\quad \xi\in \C,\, \ell\in \D,
		\end{equation}
		where $\mu$ is given in (H1).
		
		\item[(H3)] There is a constant $\kappa_3>0$ such that 
		\[|b(\xi,\ell)-b(\xi, \tilde \ell)|^2+\|\sigma(\xi,\ell)-\sigma(\xi,\tilde \ell)\|_{\mathrm{HS}}^2\leq \kappa_3(1+\|\xi\|_\infty^2) \rho(\ell,\tilde \ell)^2\]
		for $\xi\in \C$, $\ell,\,\tilde\ell\in \D$. 
	\end{itemize}
	
	\begin{myrem}\label{rem-1}
		$\mathrm{(i)}$ Condition (H1) is used to ensure the ergodicity of $(X(t))$.  For SFDEs without regime-switching, it usually demands the coefficient $\alpha$ of the term $ |\xi(0)-\eta(0)|^2$ to be negative to ensure the existence of invariant probability measures, however, the term $\beta_i\int_{-\tau_0}^0\! |\xi(s)-\eta(s)|^2\mu(\d s)$ is viewed as certain small perturbations. See, for example, \cite{BM05,BS17,EGS}. In the context, we do not require all parameters $\alpha_i$ to be negative; rather, we allow some to be positive and others to be negative. This reflects the effect of random regime-switching.   Note that requiring $\beta_i\geq 0$ is not a restrictive assumption. If (H1) holds for some $\beta_{i}<0$, we can simply set $\beta_i=0$ to ensure (H1) remains valid.  
		
		$\mathrm{(ii)}$ In \eqref{c-1}, we use $\alpha_{\ell(\epsilon)}$ for some $\epsilon\in [-\tau_0,0]$ to emphasize the different role played by $\xi-\eta\in \C$ and $\ell\in \D$. In view of (i), the right hand side of \eqref{c-1} depending on $\ell(0)$ or $\ell(-\tau_0)$ will cause no essential impact to our main results; see Theorem \ref{main-1} below. So, $\mathrm{(H1)}$ can be generalized to more general form such as $\alpha(\ell(0), \ell(-\tau_0),\ell(\epsilon))$ for some function $\alpha:\S\times\S\times \S\to \R$ without causing essential difference in the deduction of our main results.
	\end{myrem}
	
	\begin{myrem}\label{rem-1.5}
		Condition (H2) is imposed to ensure the nonexplosion of the process $(X(t))$. Condition (H3) will be used in the study of the Feller property of the segment process $(X_t,\La_t)$ in Lemma \ref{lem-2.3}.
	\end{myrem}

	First, let us present a result on the wellposedness of the SFDE \eqref{a-1}. 
	\begin{myprop}\label{prop-2.1}
		Assume $\mathrm{(A1)},\,\mathrm{(A2)}$, $\mathrm{(H1)}$-$\mathrm{(H3)}$ hold. For any  $\ell\in \D$, let $(\La(t))_{t\geq 0}$ be a continuous-time Markov chain on $\S$ with transition rate matrix $Q$ and initial value $\La(0)=\ell(0)$. Then, for any $ \xi \in \C $, SFDE \eqref{a-1} admits a unique strong solution $(X(t),\La(t))_{t\geq 0}$ with initial value $(X_0,\La_0)=(\xi,\ell)$.
	\end{myprop}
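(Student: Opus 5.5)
Because $(W(t))$ and $(\La(t))$ live on the two factors of the product space \eqref{b-0}, I would first fix $\omega_2\in\Omega_2$ and treat the switching path as a deterministic input. By (A1) the chain $(\La(t))$ with $\La(0)=\ell(0)$ is non-explosive, since $q_i\le K_0$. Hence for $\p_2$-a.e.\ $\omega_2$ its path is c\`adl\`ag with finitely many jumps on every bounded interval. Concatenating $\ell$ on $[-\tau_0,0)$ with this path gives $\La_t=\La_t(\omega_2)\in\D$ for all $t\ge0$, and $t\mapsto\La_t$ is a deterministic $\D$-valued function. On $(\Omega_1,\F_1,\p_1)$ I would then consider the SFDE with time-dependent coefficients
\[
\d X(t)=\bar b(t,X_t)\d t+\bar\sigma(t,X_t)\d W(t),\qquad \bar b(t,\xi):=b(\xi,\La_t(\omega_2)),\quad \bar\sigma(t,\xi):=\sigma(\xi,\La_t(\omega_2)),
\]
with $X_0=\xi$. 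These coefficients are measurable in $t$; this needs the measurability of $b,\sigma$ together with the measurability of $t\mapsto\La_t\in\D$, which holds because the path is piecewise constant.

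For this frozen equation I would verify the standard hypotheses for SFDEs with monotone coefficients. The first inequality of (H1) gives a one-sided Lipschitz (monotonicity) condition,
\[
2\la\xi(0)-\eta(0),\bar b(t,\xi)-\bar b(t,\eta)\raa+\|\bar\sigma(t,\xi)-\bar\sigma(t,\eta)\|_{\mathrm{HS}}^2\le \bar\alpha\,|\xi(0)-\eta(0)|^2+\bar\beta\!\int_{-\tau_0}^0|\xi(s)-\eta(s)|^2\mu(\d s).
\]
Here $\bar\alpha:=\sup_i\alpha_i$ and $\bar\beta:=\sup_i\beta_i$ need not be finite when $N=\infty$. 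In that case I would localize in time: on $[0,n]$ only finitely many states are visited, so the constants restricted to those states are finite. The second inequality of (H1) gives global Lipschitz control of $\sigma$. By (H2), $\bar b$ and $\bar\sigma$ have linear growth. Moreover $\xi\mapsto b(\xi,\ell)$ is continuous on $\C$; I would get this from (H1) and (H2) via the monotone-operator argument, or assume it as implicit in the setting.

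I would then prove existence and uniqueness on each $[0,n]$. Uniqueness is pathwise and comes from It\^o's formula for $|X(t)-Y(t)|^2$ combined with the monotonicity bound. The delay term is handled by Fubini:
\[
\int_0^t\int_{-\tau_0}^0|X(s+r)-Y(s+r)|^2\mu(\d r)\d s\le \int_{-\tau_0}^t|X(u)-Y(u)|^2\d u,
\]
using that $\mu$ is a probability measure. Gronwall's inequality then gives $X=Y$, since the initial segments agree. For existence I would run a Picard or Euler polygon scheme, or truncate $b$ to a Lipschitz approximation and pass to the limit. The a priori bound $\E\sup_{t\le n}|X(t)|^2<\infty$ follows from (H2) with BDG and Gronwall, which also rules out explosion. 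Alternatively, I would cite the known existence theorem for SFDEs under local one-sided Lipschitz and linear growth conditions (e.g.\ in the style of von Renesse--Scheutzow).

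Finally, to lift the solution to $\Omega$, I would show that the solution map $(\omega_1,\omega_2)\mapsto X(t)$ is jointly measurable and adapted to the filtration generated by $W$ and $\La$. The natural route is that $X$ is a limit of Picard iterates, each of which is jointly measurable. This gives a strong solution on the product space. Uniqueness on $\Omega$ follows from uniqueness for $\p_2$-a.e.\ frozen path via Fubini.

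I expect the main obstacle to be existence under only the one-sided (monotone) condition on $b$ when $b$ is not globally Lipschitz, together with this joint measurability in $\omega_2$. To handle both, I would construct the solution by the Euler scheme and prove it is Cauchy in $L^2$ using exactly the monotonicity estimate above, so that measurability passes to the limit automatically.
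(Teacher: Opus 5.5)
Your proof follows the paper's approach: use the product structure \eqref{b-0} to freeze the path of $(\La(t))$, then apply an existence--uniqueness theorem for time-inhomogeneous SFDEs with monotone coefficients in the spirit of \cite{RS10}. The paper instead solves successively on $[0,\tau_1)$, $[\tau_1,\tau_2),\ldots$ between jump times of $\La$ and concatenates, as in Mao--Yuan. Inside each such interval the segment $\La_t$ still moves with $t$, so the splitting does not remove the time-dependence. Your single frozen equation on $[0,n]$ is therefore a cleaner version of the same argument. You also make explicit what the paper leaves tacit:
\begin{itemize}
\item the localization needed when $(\alpha_i),(\beta_i)$ are unbounded, since only finitely many states are seen by $\ell$ on $[-\tau_0,0]$ and by $\La$ on $[0,n]$;
\item the measurability and adaptedness of the lifted solution on $\Omega$;
\item the transfer of uniqueness via Fubini. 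For this last point you should add that the stochastic integral on the product space coincides, for $\p_2$-a.e.\ $\omega_2$, with the sectionwise integral on $\Omega_1$.
\end{itemize}

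One claim must be dropped: continuity of $\xi\mapsto b(\xi,\ell)$ does \emph{not} follow from (H1)--(H2) by any monotone-operator argument. Take $d=1$, $\sigma\equiv 0$ and $b(\xi,\ell)=g(\xi(0))$, with $g=0$ on $[0,1)$ and $g(x)=-x$ otherwise. Then $g$ is non-increasing and $|g(x)|\le |x|$, so (H1)--(H3) hold with $\alpha_i=\beta_i=0$ and $\kappa_2=1$. Yet $x'=g(x)$, $x(0)=1$ has no solution. Near $0$ such an $x$ would be non-increasing with values in $[0,1]$, so $\{x=1\}=[0,t_1]$ and $x(t)=1-(t\wedge t_1)$; both $t_1>0$ and $t_1=0$ then give a contradiction. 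So the proposition needs continuity of $b(\cdot,\ell)$ as an extra standing hypothesis, and the paper tacitly relies on it when it invokes \cite{RS10}. Your first option fails; your fallback of treating continuity as implicit is the right one.

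The same point affects your last paragraph. An Euler scheme cannot be shown Cauchy ``using exactly the monotonicity estimate.'' Monotonicity controls the difference between two approximations. The discretization error between $b$ evaluated at the frozen Euler segment and at the running segment is controlled only through continuity, together with the growth bound (H2) for uniform integrability, as in Krylov's and von Renesse--Scheutzow's arguments. Once continuity in $\xi$ is assumed, (H3) makes $b$ jointly continuous on $\C\times\D$, so $t\mapsto b(\xi,\La_t)$ is c\`adl\`ag with finitely many jumps on $[0,n]$. With that, your frozen-path argument goes through.
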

	
	\begin{proof}
		This proposition can be proved in a standard way in the study of SDEs with Markovian regime-switching (cf. \cite{MY}, \cite[Lemma 2.2]{SZ21}). According to \cite[Theorem 2.3]{RS10}, for each fixed $\tilde \ell\in \D$, under the conditions (H1) and (H2), the following SDE 
		\[\d Y(t)=b(Y_t,\tilde \ell)\d t+\sigma(Y_t,\tilde \ell)\d W(t),\quad Y_0=\xi 
		\] admits a unique solution. The  method in \cite{RS10} is Euler-Maruyama's approximation, which can be easily extended to the situation of time-inhomogeneous SFDEs. We shall use the time-inhomogeneous version of \cite[Theorem 2.3]{RS10} in the following to prove our result by taking advantage of the independence between $(\La(t))_{t\geq 0}$ and $(W(t))_{t\geq 0}$ step by step. 
		
		Let 
		\[0\leq \tau_1<\tau_2<\cdots<\tau_n<\cdots\]
		be the sequence of jumping times of the Markov chain $(\La(t))_{t\geq 0}$. Due to (A1),  we have $\lim_{n\to\infty }\tau_n=\infty$ almost surely. 
		For $t\in [0,\tau_1)$, consider the SDE
		\[\d Y^{(1)}(t)=b(Y_t^{(1)}, \ell_t^{(1)})\d t+\sigma(Y_t^{(1)},\ell^{(1)}_t)\d W(t),
		\quad Y_0^{(1)}=\xi,
		\]
		where, for $\theta\!\in \! [-\tau_0,0]$, $\ell_t^{(1)}(\theta)\!=\!\ell^{(1)}( t+\theta )=\ell(t+\theta)$ if $t+\theta\leq \!0$; $\ell^{(1)}_t\!(\theta)=\ell^{(1)}\!(t+\theta) \!=\! \ell(0)$ if $t+\theta\in [0,\tau_1)$. The solution $ Y^{(1)}(t) $  exists uniquely due to \cite[Theorem 2.3]{RS10} for $t\in [0,\tau_1)$. Then, let $Y^{(1)}(\tau_1)=\lim_{t\uparrow\tau_1}Y^{(1)}(t)$. So, $Y_{\tau_1}^{(1)}$ is well defined taking values in $\C$. 
		
		Next, for $t\in [\tau_1,\tau_2)$, let us consider the SDE
		\[\d Y^{(2)}(t)=b(Y_t^{(2)},\ell^{(2)}_t)\d t+\sigma(Y_t^{(2)},\ell_t^{(2)} )\d W(t),\quad Y_{\tau_1}^{(2)}= Y_{\tau_1}^{(1)},\]
		where, for $\theta\in [-\tau_0,0]$,  $\ell^{(2)}_t(\theta)=\La(\tau_1)$ if $t+\theta\in [\tau_1,\tau_2)$; $\ell^{(2)}_t(\theta)=\ell^{(1)}(t+\theta)$ if $t+\theta<\tau_1$. The solution $Y^{(2)}(t)$ exists uniquely for $t\in [\tau_1,\tau_2)$ by \cite[Theorem 2.3]{RS10} again. Put $Y^{(2)}(\tau_2)=\lim_{t\uparrow \tau_2} Y^{(2)}(t)$. 
		
		Similar to the construction of solutions to SDEs with Markovian regime-switching as \cite[Theorem 3.8]{MY},  the desired solution can be obtained through successive construction.   
	\end{proof}
	
	The dependence of the coefficients of SFDE \eqref{a-1} on the historical trajectory  causes the solution $(X(t),\La(t))\in \R^d\times\S$ lose Markov property. One has to consider the segment process $(X_t,\La_t)$ in the infinite dimensional space $\C\times \D$ to restore the Markov property.  
	For $(\xi,\ell)\in \C\times \D$, let $(X^{\xi,\ell}(t), \La^{\ell}(t))$ be the solution to SFDE \eqref{a-1} with initial value $X^{\xi,\ell}_0=\xi\in \C$ and $\La^\ell_0=\ell \in \D$.

	\begin{myprop}\label{prop-2.2}
		Assume $\mathrm{(A1)},\,\mathrm{(A2)}$, $\mathrm{(H1)}$-$\mathrm{(H3)}$ hold.  Then the  process $(X^{\xi,\ell}_t,\La^\ell_t)_{t\geq 0}$ is a Markov process on $\C\times\D$. 
	\end{myprop}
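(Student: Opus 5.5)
We will prove the Markov property of the segment process in the standard way for functional SDEs: a flow (cocycle) identity for the solution map, combined with the independence of future noise increments from the past. Throughout, $(\F_t)$ is the filtration generated by $(W(s),\La(s))_{s\le t}$, augmented as usual. We must show that for $0\le s\le t$ and every bounded measurable $F:\C\times\D\to\R$,
\[
\E\big[F(X^{\xi,\ell}_t,\La^\ell_t)\,\big|\,\F_s\big]=(P_{t-s}F)(X^{\xi,\ell}_s,\La^\ell_s),\qquad P_uF(\eta,\tilde\ell):=\E F(X^{\eta,\tilde\ell}_u,\La^{\tilde\ell}_u).
\]

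\textbf{Step 1: flow property.} Define the shifted processes $\widehat W(u)=W(s+u)-W(s)$ and $\widehat\La(u)=\La(s+u)$ for $u\ge0$. Since $\La$ is a time-homogeneous Markov chain with generator $Q$, conditionally on $\F_s$ the process $\widehat\La$ is again a $Q$-chain, now started at $\La(s)=\La^\ell_s(0)$. The Brownian increment $\widehat W$ is independent of $\F_s$. Moreover, because $\p=\p_1\times\p_2$, the pair $(\widehat W,\widehat\La)$ is conditionally independent given $\F_s$.

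The process $u\mapsto (X(s+u),\La(s+u))$ solves \eqref{a-1} driven by $(\widehat W,\widehat\La)$ with initial segment $(X^{\xi,\ell}_s,\La^\ell_s)$. By the pathwise uniqueness in Proposition~\ref{prop-2.1},
\[
(X^{\xi,\ell}_{s+u},\La^\ell_{s+u})=\Phi_u\big(X^{\xi,\ell}_s,\La^\ell_s;\widehat W,\widehat\La\big),
\]
where $\Phi_u(\eta,\tilde\ell;w,\lambda)$ denotes the solution map. This map is built from the successive construction in the proof of Proposition~\ref{prop-2.1}, running between the jump times of $\lambda$ and using the Euler--Maruyama construction of \cite{RS10} on each interval.

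\textbf{Step 2: conditioning.} We want to apply the freezing lemma: if $Z$ is $\F_s$-measurable and $\mathcal N$ is independent of $\F_s$, then $\E[G(Z,\mathcal N)\mid\F_s]=g(Z)$ with $g(z)=\E\,G(z,\mathcal N)$. The chain part $\widehat\La$ is not independent of $\F_s$; it depends on $\F_s$ only through $\La(s)$. We handle this by representing the chain through independent randomness. Write $\widehat\La=\Psi(\La(s),\mathcal N_2)$, where $\mathcal N_2$ is a Poisson random measure (or an i.i.d. family of exponential and uniform variables) independent of $\F_s$. Then
\[
(X_t,\La_t)=G\big(X_s,\La_s,\widehat W,\mathcal N_2\big),
\]
with $(\widehat W,\mathcal N_2)$ independent of $\F_s$. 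The freezing lemma gives $\E[F(X_t,\La_t)\mid\F_s]=g(X_s,\La_s)$. Since $(\widehat W,\Psi(\tilde\ell(0),\mathcal N_2))$ has the same law as the original driving pair started from $\tilde\ell(0)$, uniqueness in law gives $g(\eta,\tilde\ell)=P_{t-s}F(\eta,\tilde\ell)$.

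\textbf{Main obstacle: joint measurability.} The freezing lemma requires $(\eta,\tilde\ell,w,\nu)\mapsto \Phi_u(\eta,\tilde\ell;w,\Psi(\tilde\ell(0),\nu))$ to be jointly measurable on $\C\times\D\times C([0,\infty);\R^d)\times(\text{counting measures})$, with $\D$ carrying the Borel $\sigma$-field of the Skorokhod metric $\rho$. The plan is as follows.

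First, $\tilde\ell\mapsto\tilde\ell(0)$ and the finitely many jump times are measurable on $(\D,\rho)$; this holds because the Skorokhod Borel $\sigma$-field coincides with the one generated by coordinate projections \cite{EK86}.

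Second, on each inter-jump interval the Euler--Maruyama approximations are measurable functionals of $(\eta,\tilde\ell,w)$. The strong solution is their in-probability limit, so by passing to a subsequence it admits a measurable version.

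Third, the pieces are concatenated at jump times that are themselves measurable, which preserves measurability. Alternatively, continuity of $\eta\mapsto\Phi$ in probability, which follows from (H1) via It\^o's formula and Gronwall, yields a jointly measurable modification by a standard argument.

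Finally, taking $F$ to be indicators and using the monotone class theorem extends the identity from bounded continuous $F$ to all bounded measurable $F$. This gives the Markov property, and the Chapman--Kolmogorov relation $P_{t+s}=P_tP_s$ follows as a byproduct.
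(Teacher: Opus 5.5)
Your argument is correct in outline, but it does by hand what the paper delegates to a citation. The paper does not carry out a flow-and-conditioning argument. It realizes $(\La(t))$ via Skorokhod's representation $\La(t)=\La(0)+\int_0^t\int_{\R_+}h(\La(s-),z)\mathcal N(\d s,\d z)$ and rewrites $(X,\La)$ as a single delay equation in $\R^{d+1}$ driven by $(W,\mathcal N)$. It then invokes \cite[Proposition 3.3]{RRG} on the Markov property of segment processes of L\'evy-driven delay equations.

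You use the same key device: the post-$s$ chain is represented through a Poisson random measure independent of $\F_s$, which requires building the whole chain from $\mathcal N$ from the outset, as the paper does. You then prove the Markov property directly via the cocycle identity, pathwise uniqueness and the freezing lemma.

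What each route buys: the paper's version is two lines long, but leaves the reader to check that the framework of \cite{RRG} accommodates a nonlinear, merely one-sided Lipschitz drift and a jump term given by a Poisson integral with integrand $h(\La(s-),z)$. Your version is self-contained and shows exactly where $\p=\p_1\times\p_2$, time-homogeneity and uniqueness enter. The price is measurability bookkeeping, and two parts of it need more care than you give them.

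First, continuity in probability in $\eta$ alone does not give a modification jointly measurable in $(\eta,\tilde\ell)$. You also need continuity in $\tilde\ell$. This is available from (H3) plus the fact that $\tilde\ell\mapsto\tilde\ell(0)$ is locally constant under $\rho$, so the driving chains coincide, as in the short-time part of the proof of Lemma~\ref{lem-2.3}. Alternatively, use a Yamada--Watanabe--Kallenberg type measurable solution functional. In either case, any subsequence extraction in the Euler--Maruyama argument must be made measurably in the initial point.

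Second, Proposition~\ref{prop-2.1} only covers deterministic initial data. So the identity $(X_{s+u},\La_{s+u})=\Phi_u(X_s,\La_s;\widehat W,\widehat\La)$ requires checking that inserting the $\F_s$-measurable random segment into $\Phi_u$ actually produces a solution. One way is to do this first for countably-valued initial segments, pasting on $\F_s$-measurable events, and then pass to the limit using these continuity estimates.
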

	
	\begin{proof}
		We shall apply the result of \cite[Proposition 3.3]{RRG} on the Markov property of segment process associated with SFDEs  driven by L\'evy processes. To this aim, we only need to rewrite $(\La(t))$ as a stochastic integration w.r.t. the Poisson random measure via Skorokhod's representation theorem, which has been extensively studied in the study of regime-switching processes; see, e.g. \cite{YZ10} and its recent application  to the comparison theorem in \cite{Sh24}.
		
		Define a sequence of intervals $\Gamma_{ij}$, $i\neq j\in \S$, as follows: for each $i\in \S$, 
		\begin{align*}
			\Gamma_{i1}&=[0, q_{i1}),\ \Gamma_{i2}=[q_{i1}, q_{i1}+q_{i2}),\ldots, 
			\Gamma_{iN}=\Big[\sum_{j=1, j\neq i  }^{N-1}q_{ij} , \sum_{j=1, j\neq i  }^N q_{ij}\Big).
		\end{align*}
		By (A1), $\sum_{j=1,j\neq i}^N q_{ij}=q_i<\infty$, and hence each interval $\Gamma_{ij}$ with $i\neq j$ is a finite interval. For the convenience of notation, we put $\Gamma_{ii}=\emptyset$. Let $\mathcal{N}(\d t, \d z)$ be a Poisson random measure on $[0,\infty)\times \R_+$ with intensity measure $\d t\times\bm{m}(\d z)$ and independent of $(W(t))$, where $\bm{m}(\d z)$ denotes the Lebesgue measure over $\R_+$. 
		Let 
		\[h(i,z)=\sum_{j\in\S, j\neq i} (j-i)\mathbf1_{\Gamma_{ij}}(z),\quad i\in \S, \,z\in \R_+.\]
		Then, the solution $(\La(t))$ to the SDE 
		\begin{equation}\label{c-3.1}
			\La(t)= \La(0)+\int_0^t\int_{\R_+} h(\La(s-), z)\mathcal{N}(\d s,\d z)
		\end{equation} is a continuous-time Markov chain on $\S$ with transition rate matrix $Q$; see more details in \cite[p. 546]{Sh24}. 
		
		As a consequence, we can rewrite $(X^{\xi,\ell}(t),\La^\ell(t))$ as a solution to the following SFDE
		\begin{equation*}
			\begin{split}
				\d \begin{pmatrix}
					X(t)\\ \La(t)
				\end{pmatrix} & = \begin{pmatrix} b(X_t,\La_t)\\ 0\end{pmatrix} \d t+ \begin{pmatrix} \sigma(X_t,\La_t) & 0\\ 0 &0\end{pmatrix} \begin{pmatrix} \d W(t)\\ \d W(t)\end{pmatrix}\\  &\quad +\int_{\R_+}\begin{pmatrix} 0\\ h(\La(s-),z)\end{pmatrix} \mathcal{N}(\d s, \d z).
			\end{split}
		\end{equation*}
		Then, the segment process $(X^{\xi,\ell}_t,\La^\ell_t)$ is a Markov process on the infinite dimension space $\C\times\D$ 
		due to \cite[Proposition 3.3]{RRG}. 
	\end{proof}
	Note that for the solution $(X^{\xi,\ell}(t),\La^\ell(t))$ to SFDE \eqref{a-1},  $(X^{\xi,\ell}_t,\La^\ell (t))$ does not have Markov property. Hence, we need to study the segment process $(X^{\xi,\ell}_t,\La^\ell_t)$ in $\C\times\D$ in order to apply the Markov property. In the following, let $P_t$ denote the semigroup associated with the Markov process $(X_t,\La_t)_{t\geq 0}  $ on $\C\times\D$. For any $\nu\in \pb(\C\times \D)$, $\nu P_t$ represents the distribution of $(X_t,\La_t)$ with random initial value $(X_0,\La_0)$  whose distribution is $\nu$. So, 
	\[\nu P_t(F)=\E[F(X_t,\La_t)]\quad \text{with $(X_0,\La_0)\sim \nu$},\quad F\in \B_b(\C\times\D).\]

	\section{Invariant probability measure:  finite state space case} \label{sec-fin}
	
	In this section, we study the existence and uniqueness of invariant probability measure for the process $(X_t,\La_t)$ satisfying \eqref{a-1} when the state space $\S=\{1,2,\ldots,N\}$ is a finite state space, i.e. $N<\infty$. The situation $N=\infty$ will be studied in the next section. 
	
	To study the invariant probability measure of the semigroup $P_t$ associated with the Markov process $(X_t,\La_t)$, it is known that the Feller property plays important role.  So, we first study the Feller property of  $P_t$ based on the regularity of the solution to \eqref{a-1}.

	Let $\theta:\S\to \R$ be a function, and put 
	\[\wt Q=Q+\mathrm{diag}(\theta_1,\theta_2,\ldots,\theta_N),\quad \eta_1(\wt Q)=-\max\{\mathrm{Re}(\gamma);\,\gamma\in \mathrm{Spec}(\wt Q)\},
	\]
	where $\mathrm{diag}(a_1,a_2,\ldots,a_N)$ denotes the diagonal matrix with diagonal $(a_1,a_2,\ldots,a_N)$, $\mathrm{Re}(\gamma)$ stands for the real part of $\gamma$ and $\mathrm{Spec}(\wt Q)$ denotes the spectrum of the operator $\wt Q$.
	\begin{mylem}[\cite{BGM}, Proposition 4.1]\label{lem-2.4}
		Let $(\La(t))$ be a continuous-time Markov chain on the finite state space $\S$ with transition rate matrix $Q=(q_{ij})$. Then, there are constants $c_1,c_2>0$ such that 
		\[c_1\e^{-\eta_1(\wt Q) t}\leq \E_i\Big[\e^{\int_0^t \theta_{\La(s)} \d s}\Big] \leq c_2\e^{-\eta_1(\wt Q) t},\quad t>0,\  i\in \S,\]
		where $\E_i[\,\cdot\,]$ denotes the expectation for   $(\La(t))$ with initial value $\La(0)=i$. 
	\end{mylem}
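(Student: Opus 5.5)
The plan is a Feynman--Kac argument combined with Perron--Frobenius theory for the finite matrix $\wt Q=Q+\mathrm{diag}(\theta)$. Define $u_i(t)=\E_i\big[\e^{\int_0^t\theta_{\La(s)}\d s}\big]$. By the Markov property and a first-jump (or Dynkin) argument, the vector $u(t)=(u_1(t),\dots,u_N(t))^{\top}$ solves the linear ODE $u'(t)=\wt Q u(t)$ with $u(0)=\mathbf 1$. Since $\S$ is finite, this gives $u(t)=\e^{t\wt Q}\mathbf 1$. Both bounds then reduce to two-sided estimates on the matrix exponential applied to $\mathbf 1$.

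Next I would shift to a nonnegative matrix. Let $c=\max_i|\theta_i|+\max_i q_i$, so that $A:=\wt Q+cI$ has nonnegative entries. By the irreducibility in (A1), $A$ is irreducible, and $\e^{t\wt Q}=\e^{-ct}\e^{tA}$ with $\e^{tA}$ having strictly positive entries for $t>0$. The Perron--Frobenius theorem gives a simple real eigenvalue $r$ of $A$ equal to its spectral radius, with a strictly positive right eigenvector $v$. Every other eigenvalue $\gamma$ of $A$ satisfies $\mathrm{Re}\,\gamma\le|\gamma|\le r$, so $r-c=\max\{\mathrm{Re}\,\gamma;\gamma\in\mathrm{Spec}(\wt Q)\}=-\eta_1(\wt Q)$.

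The bounds then come from comparing $\mathbf 1$ with the eigenvector $v$. Normalize so that $\min_i v_i=1$, and set $M=\max_i v_i$. Then $M^{-1}v\le\mathbf 1\le v$ componentwise. Because $\e^{t\wt Q}$ has nonnegative entries (it is $\e^{-ct}\e^{tA}$), it preserves componentwise order. Hence
\[
M^{-1}\e^{-\eta_1 t}v\le u(t)\le \e^{-\eta_1 t}v ,
\]
since $\e^{t\wt Q}v=\e^{(r-c)t}v$. Taking $c_1=M^{-1}\min_i v_i=M^{-1}$ and $c_2=M$ gives the claim for all $t>0$ and $i\in\S$.

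The only delicate step is identifying $-\eta_1(\wt Q)$ with the Perron root $r-c$, because the spectrum may contain complex eigenvalues. This is handled by the bound $\mathrm{Re}\,\gamma\le|\gamma|\le r$ for eigenvalues of the nonnegative matrix $A$. Irreducibility is what guarantees that $v$ is strictly positive, so that $\mathbf 1$ is sandwiched between multiples of $v$. This is exactly where finiteness of $\S$ is used, and why the argument does not extend directly to $N=\infty$.
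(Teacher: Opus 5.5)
Your proof is correct and follows essentially the route of the cited source \cite{BGM}; the paper does not reprove the lemma, and Remark~\ref{rem-2} only notes that the argument rests on the Perron--Frobenius theorem. That route is the Feynman--Kac representation $u(t)=\e^{t\wt Q}\mathbf 1$, the strictly positive Perron eigenvector of the shifted nonnegative matrix $\wt Q+cI$, and sandwiching $\mathbf 1$ between multiples of that eigenvector, exactly as you do. You are right to invoke the irreducibility from (A1): the statement leaves it implicit, but it is needed for the strictly positive eigenvector and hence for the lower bound.
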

	\begin{myrem}\label{rem-2}
		Lemma \ref{lem-2.4} shows that the estimates of exponential functionals of Markov chains in terms of the quantity $\eta_1(\wt{Q})$ are quite sharp. Nevertheless, the argument of Lemma \ref{lem-2.4} essentially depends on the Perron-Frobenius theorem. Consequently, this lemma is valid only for finite-order matrices.  
	\end{myrem}
	
	Let   $\hat\alpha=\min_{i\in\S} \alpha_i$, $\hat{\beta}=\min_{i\in\S}\beta_i$, $\check{\alpha}=\max_{i\in\S} \alpha_i$ and $\check{\beta}=\max_{i\in\S} \beta_i$. 
	For $p\geq 0$, put
	\begin{equation}\label{c-2.5}
		Q_p=Q+p\,\mathrm{diag}\Big(\alpha_1\!+\!\beta_1\Big(\!\int_{-\tau_0}^0 \!\!\e^{\hat{\alpha}\theta} \mu(\d \theta)\Big), \ldots,\alpha_N\!+\! \beta_N \Big(\!\int_{-\tau_0}^0\!\!\e^{\hat{\alpha}\theta} \mu(\d \theta)\Big)  \Big),
	\end{equation}
	where $Q$  is the transition rate matrix of $(\La(t))$. 
	Define
	\begin{equation}\label{c-3}
		\begin{aligned}
			\eta_p&=-\sup\big\{\mathrm{Re}(\gamma);\ \gamma\in \mathrm{Spec}(Q_p)\big\},\quad p\geq 0  .
		\end{aligned}
	\end{equation}

	\begin{mylem}\label{lem-2.2}
		Assume $\mathrm{(H1)}$-$\mathrm{(H3)}$ hold. Let $(X^{\xi,\ell}(t),\La^\ell(t))$ be the solution to \eqref{a-1} with initial value $X_0^{\xi,\ell}=\xi\in \C$ and $\La_0^\ell=\ell\in \D$. Then there is a constant $c>0$, depending on $\tau_0,\check{\beta},\hat{\alpha}$, $\check{\alpha}$ and $\mu$,  such that   
		\begin{equation}\label{c-4}
			\E\big[\|X_t^{\xi,\ell}-X_t^{\eta,\ell}\|_\infty^2\big]\leq c\|\xi-\eta\|_\infty^2 \e^{ -\eta_1 t},
		\end{equation}
		\begin{equation}\label{c-5}
			\E\big[\|X_t^{\xi,\ell}\|_\infty^2\big]\leq c(1+\|\xi\|_\infty^2)\e^{-\eta_1t}
		\end{equation}
		for $t>0$, $\xi,\eta\in \C, \ell\in \D$.
	\end{mylem}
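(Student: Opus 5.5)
Set $Z(t)=X^{\xi,\ell}(t)-X^{\eta,\ell}(t)$. Both solutions are driven by the same Brownian motion and the same switching path $\La^\ell$, so we can work conditionally on $\Omega_2$, i.e.\ for a frozen trajectory $\La(\cdot)$. This is legitimate because $W$ and $\La$ are independent under $\p=\p_1\times\p_2$. The plan has three steps: a pathwise Gronwall-type bound with time-dependent coefficient $\alpha_{\La(t+\epsilon)}$, removal of the delay integral through an exponential weight, and finally averaging over $\La$ using Lemma \ref{lem-2.4} applied to $Q_1$.

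\emph{Step 1 (conditional Itô estimate).} Apply Itô's formula to $|Z(t)|^2$ and use (H1) with $\ell$ replaced by $\La_t$, so that the coefficients become $\alpha_{\La(t+\epsilon)}$ and $\beta_{\La(t+\epsilon)}$. Introduce the integrating factor $\Phi(t)=\exp\big(-\int_0^t\alpha_{\La(s+\epsilon)}\d s\big)$; for $s+\epsilon<0$ take $\La(s+\epsilon)=\ell(s+\epsilon)$, and note this initial stretch has length at most $\tau_0$, so it only contributes a constant factor. Taking $\E_1$ gives
\[
\E_1|Z(t)|^2\le \Phi(t)^{-1}\Big(|Z(0)|^2+\int_0^t\Phi(s)\beta_{\La(s+\epsilon)}\int_{-\tau_0}^0\E_1|Z(s+r)|^2\mu(\d r)\d s\Big).
\]
\emph{Step 2 (handling the delay).} Set $u(t)=\sup_{s\le t}\E_1|Z(s)|^2\Phi(s)$, with the convention that $Z(s)=\xi(s)-\eta(s)$ for $s\le0$. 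Since $\alpha\ge\hat\alpha$, we have $\Phi(s+r)/\Phi(s)\le \e^{\hat\alpha r}$ up to harmless $\check\alpha\tau_0$ corrections, and this is exactly where the factor $\int\e^{\hat\alpha\theta}\mu(\d\theta)$ in \eqref{c-2.5} appears. Feeding this into Step 1 yields a Gronwall inequality whose solution is $\exp\big(\int_0^t(\alpha_{\La}+\beta_{\La}\int\e^{\hat\alpha\theta}\mu(\d\theta))\d s\big)$, that is,
\[
\E_1|Z(t)|^2\le c\|\xi-\eta\|_\infty^2\exp\Big(\int_0^t\theta_{\La(s+\epsilon)}\d s\Big),
\]
where $\theta_i$ are the diagonal entries of $Q_1$.

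\emph{Step 3 (averaging over $\La$, and the sup norm).} Take $\E_2$. Lemma \ref{lem-2.4} with $\theta_i=\alpha_i+\beta_i\int\e^{\hat\alpha\theta}\mu(\d\theta)$ gives the bound $c_2\e^{-\eta_1 t}$. The shift by $\epsilon$ is handled by conditioning at time $-\epsilon$ via the Markov property, paying a factor $\e^{\max|\theta_i|\tau_0}$. To upgrade to $\E\|Z_t\|_\infty^2$, use BDG on $[t-\tau_0,t]$ together with the bound on $\|\sigma(\xi,\ell)-\sigma(\eta,\ell)\|_{\mathrm{HS}}^2$ from (H1) and the already obtained pointwise bounds on $[t-2\tau_0,t]$. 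This only enlarges the constant, since $\e^{-\eta_1 s}\le \e^{|\eta_1|\tau_0}\e^{-\eta_1 t}$ for $s\in[t-2\tau_0,t]$. For \eqref{c-5}, repeat the argument with $\eta=0$. Here (H2) supplies the linear growth, and the extra affine terms produce the ``$1+$''.

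The main obstacle is Step 2: the weight $\Phi$ is random and non-monotone. The comparison $\Phi(s+r)/\Phi(s)\le\e^{\hat\alpha r}$ must be done carefully, using $\hat\alpha$ when $\hat\alpha<0$ and possibly $\check\alpha$ on the other side, so that the delay feeds back into a linear Gronwall inequality with coefficient exactly $\beta_{\La}\int\e^{\hat\alpha\theta}\mu(\d\theta)$. Otherwise the rate is not $\eta_1$ but something weaker. If the sup-based argument loses too much, I would fall back on a weighted Lyapunov functional of the form $|Z(t)|^2+\int_{-\tau_0}^0\int_{t+r}^t(\cdots)\d s\,\mu(\d r)$.
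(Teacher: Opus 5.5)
This is essentially the paper's argument. Both proofs use a quenched It\^o estimate for $|Z(t)|^2$ with the random weight $\Phi$, reduce the delay term to a Gronwall inequality with coefficient $\beta_{\La(s+\epsilon)}\int_{-\tau_0}^0\e^{\hat\alpha\theta}\mu(\d\theta)$, apply Lemma \ref{lem-2.4} to $Q_1$, and finish with a BDG upgrade on $[t-\tau_0,t]$. The only variation is that you close the delayed inequality with the running supremum $u(t)$. The paper instead substitutes $s\mapsto s+\theta$ via Fubini and runs Gronwall with the kernel $\Upsilon(s)$. Both give the same exponent, and yours needs less bookkeeping. You also deal with the $\epsilon$-shift, which the paper skips. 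No Markov property is needed for that: for $s\in[0,-\epsilon]$ the integrand is determined by $\ell$, and the remainder equals $\int_0^{t+\epsilon}\theta_{\La(u)}\d u$ with $\La(0)=\ell(0)$.

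However, the step you flag as the main obstacle is written backwards. What actually appears is
\[
\Phi(s)\,\E_1|Z(s+r)|^2=\frac{\Phi(s)}{\Phi(s+r)}\,\Phi(s+r)\,\E_1|Z(s+r)|^2,\qquad \frac{\Phi(s)}{\Phi(s+r)}=\exp\Big(-\int_{s+r}^{s}\alpha_{\La(u+\epsilon)}\d u\Big)\le \e^{\hat\alpha r}
\]
for $r\le 0\le s+r$. This holds for either sign of $\hat\alpha$ and involves no $\check\alpha$; it is exactly the paper's bound $\e^{-\int_s^{s-\theta}\alpha_{\La(r+\epsilon)}\d r}\le\e^{\hat\alpha\theta}$. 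The inequality you wrote, $\Phi(s+r)/\Phi(s)\le\e^{\hat\alpha r}$, already fails for constant $\alpha>0$.

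An $\e^{\check\alpha\tau_0}$-type factor in the Gronwall coefficient would also not be harmless. It would enlarge the $\beta_i$-terms on the diagonal of $Q_1$ and so change the rate away from $\eta_1$. Such constants are harmless only in the initial stretch $s+r<0$, where they enter the additive constant.

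Finally, \eqref{c-5} does not come from linear growth plus affine terms. (H2) forces $b(0,\ell)=0$ and $\sigma(0,\ell)=0$, so $X^{0,\ell}\equiv 0$ and \eqref{c-5} is just \eqref{c-4} with $\eta=0$, with the dissipativity coming from (H1). Genuine affine terms would leave a non-decaying contribution of order $\int_0^t\e^{-\eta_1(t-s)}\d s$, which is incompatible with the factor $\e^{-\eta_1 t}$.
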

	
	\begin{proof}
		Let $\Gamma(t)=X^{\xi,\ell}(t)\!-\! X^{\eta,\ell}(t)$.  It\^o's formula and (H1) yield  that  
		\begin{align*}
			\d |\Gamma(t)|^2&\leq \Big(\alpha_{\La(t+\epsilon)}|\Gamma(t)|^2+\beta_{\La(t+\epsilon)} \int_{-\tau_0}^0\!|\Gamma(t+s)|^2 \mu(\d s)\Big)\d t+\d M(t),
		\end{align*}
		where $\d M(t)=2\big \la \Gamma(t), \big(\sigma(X_t^{\xi,\ell},\La_t^\ell)-\sigma( X_t^{\eta,\ell}, \La_t^{\ell})\big)\d W(t)\big\raa $. Using the independence between $(W(t))$ and $(\La(t))$ and the construction of probability space $(\Omega,\F,\p)$ in \eqref{b-0}, we take the conditional expectation $\E_{\p_1}$ to deduce that 
		\begin{equation}\label{c-5.0}
			\begin{split}
				&\E_{\p_1}\Big[ \e^{-\int_0^t \alpha_{\La(s+\epsilon)} \d s} |\Gamma(t)|^2\Big]\\  &\leq |\Gamma(0)|^2+\!\E_{\p_1}\Big[\int_0^t\!\! \beta_{\La(s+\epsilon)} \e^{-\int_0^s \alpha_{\La(r+\epsilon)} \d r} \int_{-\tau_0}^0 |\Gamma(s+r)|^2\mu(\d r)\d s\Big]\\
				&=|\Gamma(0)|^2\!+\!\int_{-\tau_0}^0\!\int_\theta^{t+\theta} \!\E_{\p_1}\!\big[\e^{-\int_0^{s } \alpha_{\La(r+\epsilon)} \d r}|\Gamma(s )|^2\big]\beta_{\La(s+\epsilon-\theta)} \e^{-\int_s^{s-\theta} \alpha_{\La(r+\epsilon)}\d r} \d s\mu(\d \theta)\\
				&\leq |\Gamma(0)|^2\!+\!\int_{-\tau_0}^0\!\int_{\theta}^0  \e^{-\int_0^{s-\theta} \alpha_{\La(r+\epsilon)}\d r} |\Gamma(s)|^2  \beta_{\La(s+\epsilon-\theta)} \d s \mu(\d \theta)\\
				&\quad +\int_0^t\! \E_{\p_1} \!\big[ \e^{-\int_0^s \alpha_{ \La(r+\epsilon)}\d r}  |\Gamma(s)|^2\big]\Big(\int_{-\tau_0}^0 \beta_{\La(s+\epsilon-\theta)} \e^{- \int_s^{ s-\theta}\! \alpha_{\La(r+\epsilon)} \d r} \mu(\d \theta)\Big)\d s\\
				&\leq |\Gamma(0)|^2\!+\!\int_{-\tau_0}^0\!\int_{\theta}^0 \check{\beta}\e^{\hat{\alpha}(s-\theta)} |\Gamma(s)|^2\d s \mu(\d \theta)\\
				&\quad +\int_0^t\! \E_{\p_1} \!\big[ \e^{-\int_0^s \alpha_{ \La(r+\epsilon)}\d r}  |\Gamma(s)|^2\big]\Big(\int_{-\tau_0}^0 \beta_{\La(s+\epsilon-\theta)} \e^{- \int_s^{ s-\theta}\! \alpha_{\La(r+\epsilon)} \d r} \mu(\d \theta)\Big)\d s\\
				&\leq   |\Gamma(0)|^2\!+\! \check{\beta} \e^{\hat{\alpha}\tau_0}\|\Gamma_0\|_\infty^2 \tau_0 \!+\!\int_0^t\!\Upsilon(s) \E_{\p_1}\big[\e^{-\int_0^s\alpha_{\La(r+\epsilon)} \d r}|\Gamma(s)|^2\big] \d s,
			\end{split}
		\end{equation}
		where 
		\begin{equation}\label{c-5.1}
			\Upsilon(s)=\int_{-\tau_0}^0\!\beta_{\La(s+\epsilon-\theta)} \e^{-\int_s^{s-\theta} \alpha_{\La(r+\epsilon)}\d r} \mu(\d \theta),\quad s\geq 0.
		\end{equation}
		By Gronwall's inequality, 
		\begin{equation}\label{c-5.2}
			\begin{split}
				\E_{\p_1}\big[ |\Gamma(t)|^2\big] 
				\leq \Big(|\Gamma(0)|^2\!+\! \check{\beta} \e^{\hat{\alpha}\tau_0}\|\Gamma_0\|_\infty^2 \tau_0\Big) \e^{ \int_0^t (\alpha_{\La(s+\epsilon)}+\Upsilon(s) )\d s} .
			\end{split}
		\end{equation}
		Since 
		\begin{align*}
			\int_0^t \Upsilon(s)\d s&=\int_{-\tau_0}^0\!\Big(\int_{ -\theta}^0+\int_{0}^t+\int_t^{t-\theta} \Big)\!\beta_{\La(s+\epsilon)} \e^{-\int_{s +\theta}^{s} \alpha_{\La(r+\epsilon)}\d r} \d s\mu(\d \theta) \\
			&\leq \int_{-\tau_0}^0\!\int_{-\theta}^0\!  \e^{\hat{\alpha} \theta} \beta_{\La(s+\epsilon)} \d s\mu(\d \theta)\!+\! \int_t^{t+\tau_0}\!\!\beta_{\La(s+\epsilon)}\Big( \int_{-\tau_0}^0 \!\!\e^{\hat{\alpha}\theta}\mu(\d \theta)\Big) \d s\\
			&\quad+\Big(\int_0^{t}\beta_{\La(s+\epsilon)} \d s\Big)\Big(\int_{-\tau_0}^0\!\e^{\hat{\alpha}\theta} \mu(\d \theta)\Big)\\
			&\leq \check{\beta}\tau_0\int_{-\tau_0}^0\!\e^{\hat{\alpha}\theta} \mu(\d \theta)+\Big(\int_0^{t}\beta_{\La(s+\epsilon)} \d s\Big)\Big(\int_{-\tau_0}^0\!\e^{\hat{\alpha}\theta} \mu(\d \theta)\Big)   
		\end{align*} for $t\geq 0$,
		we get from \eqref{c-5.2} that 
		\begin{equation}\label{c-5.3}
			\begin{split}
				&\E_{\p_1}\big[ |\Gamma(t)|^2\big]  
				\leq c   \|\Gamma_0\|_\infty^2
				\exp\Big(\int_0^t\!\Big(\!\alpha_{\La(s+\epsilon)}\!+\! \beta_{\La(s+\epsilon)}\!\int_{-\tau_0}^0\!\e^{\hat{\alpha}\theta} \mu(\d \theta)\Big)\d s  \Big),
			\end{split}
		\end{equation}
		where 
		\[c=\Big(1+\check{\beta} \e^{\hat{\alpha}\tau_0} \tau_0\Big) \e^{\check{\beta} \tau_0 \int_{-\tau_0}^0 \e^{\hat{\alpha} \theta}\mu(\d \theta)}.\]
		According to Lemma \ref{lem-2.4}, 
		\begin{equation}\label{c-5.4}
			\begin{split}
				&\E\big[|\Gamma(t)|^2\big] 
				\leq c \|\Gamma_0\|_\infty^2\e^{-\eta_1 t}, \quad t>0,
			\end{split}
		\end{equation} where $\eta_1$ is given in \eqref{c-3}.
		
		Next, for   $0\leq s<t$, using It\^o's formula, BDG's inequality and (H1), we have that 
		\begin{align*}
			\E\big[\sup_{s\leq r\leq t} |\Gamma(r)|^2\big] 
			&\leq \E|\Gamma(s)|^2 +c \int_{s-\tau_0}^t \E\big[|\Gamma(r)|^2\big] \d r \\
			&\quad +c\E\Big[\Big(\int_s^t\!|\Gamma(r)|^2 \|\sigma(X_r^{\xi,\ell},\La_r^\ell)-\sigma(X_r^{\eta,\ell},\La_r^\ell) \|_{\mathrm{HS}}^2\d r\Big)^{\frac 12}\Big] \\
			&\leq \E|\Gamma(s)|^2+c\int_{s-\tau_0}^t\!\E|\Gamma(r)|^2 \d r+\frac 12 \E\Big[\sup_{s\leq r\leq t} |\Gamma(r)|^2\Big].
		\end{align*}
		Hence, 
		\begin{equation}\label{c-5.5}
			\E\big[ \sup_{s\leq r\leq t}|\Gamma(r)|^2\big]
			\leq \E|\Gamma(s)|^2+c\int_{s-\tau_0}^t \E|\Gamma(r)|^2\d r.
		\end{equation}
		Invoking the estimate \eqref{c-5.4}, we finally get 
		\[\E\|\Gamma_t\|_\infty^2\leq c\|\xi-\eta\|_\infty^2 \e^{-\eta_1 t},\]
		which is the desired estimate \eqref{c-4}. 
		
		The estimate \eqref{c-5} can be deduced similarly as above by replacing $\Gamma(t)$ with $X^{\xi,\ell}(t)$ under the condition (H2), whose proof is omitted.  
	\end{proof}
	
	Based on the continuous dependence of $(X_t^{\xi,\ell})$ on the initial value $\xi\in\C$ established in Lemma \ref{lem-2.2}, we proceed to study the Feller property of the segment process $(X_t^{\xi,\ell},\La_t^{\ell})$.  
	
	\begin{mylem}\label{lem-2.3} Assume that $\mathrm{(A1)}$, $\mathrm{(A2)}$, $\mathrm{(H1)}$-$\mathrm{(H3)}$ hold and $\S$ is a finite state space. 
		The semigroup $P_t$ associated with the segment process $(X_t,\La_t)_{t\geq 0}$ admits Feller property. Namely, for any bounded continuous function $F$ on $\C\times \D$,    $(\xi_m,\ell_m),(\xi,\ell)\in \C\times\D$ satisfying $\lim_{m\to\infty} d\big( (\xi_m,\ell_m), (\xi, \ell)\big)=0$, it holds that  
		\begin{equation}\label{c-6}
			\lim_{m\to\infty} \delta_{(\xi_m,\ell_m)}P_t(F)=\delta_{(\xi,\ell)}P_t(F),
		\end{equation}
		where $\delta_{(\xi,\ell)}$ denotes the Dirac measure at the point $(\xi,\ell)$. 
	\end{mylem}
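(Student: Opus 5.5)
The plan is to construct all the processes on one probability space, driven by the same Brownian motion $(W(t))$ and the same Poisson random measure $\mathcal N$ from the proof of Proposition \ref{prop-2.2}. We then show that $(X^{\xi_m,\ell_m}_t,\La^{\ell_m}_t)\to(X^{\xi,\ell}_t,\La^\ell_t)$ in probability in the metric $d$. Since $F$ is bounded and continuous, \eqref{c-6} follows from this by a subsequence argument and dominated convergence.

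\emph{Step 1 (the switching component).} Since $\S$ is finite and $\vartheta$ is the discrete metric, $\rho(\ell_m,\ell)<1$ forces the existence of $\theta_m\in\Theta$ with $\gamma(\theta_m)\to0$ and $\ell_m(t)=\ell(\theta_m(t))$ for all $t\in[-\tau_0,0]$. Because $\gamma(\theta_m)<\infty$, each $\theta_m$ is a bi-Lipschitz increasing bijection of $[-\tau_0,0]$, so $\theta_m(0)=0$. Hence $\ell_m(0)=\ell(0)$ for all large $m$, and by \eqref{c-3.1} with the same $\mathcal N$ we get $\La^{\ell_m}(s)=\La^\ell(s)$ for all $s\ge0$.

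For $t\ge\tau_0$ this gives $\La^{\ell_m}_t=\La^\ell_t$ exactly. For $t<\tau_0$ we build a time change on $[-\tau_0,0]$ as follows: we apply the shifted $\theta_m$ on the part corresponding to $[-\tau_0,0]$ in the old time, rescaled to fit $[t-\tau_0,0]$ near the left end, and take the identity on $[0,t]$. This should yield pathwise
\[\rho(\La^{\ell_m}_s,\La^\ell_s)\le C\rho(\ell_m,\ell)\ \text{ for } s\in[0,\tau_0),\qquad \rho(\La^{\ell_m}_s,\La^\ell_s)=0\ \text{ for } s\ge\tau_0.\]
I expect this step to be the main technical obstacle. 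The issue is that the segment $\La^\ell_t$ only sees $\ell$ on $[t-\tau_0,0]$, so $\theta_m$ does not map this window onto itself. I would first try to compose $\theta_m$ with a small affine correction near $t-\tau_0$, controlling $\gamma$ of the composite by $\gamma(\theta_m)+o(1)$. If that fails, I would fall back on the characterization in \cite[Chapter 3, Proposition 5.3]{EK86}, together with the fact that the jump times of $\ell_m$ converge to those of $\ell$.

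\emph{Step 2 (the diffusion component).} We split the difference as
\[X^{\xi_m,\ell_m}-X^{\xi,\ell}=\big(X^{\xi_m,\ell_m}-X^{\xi,\ell_m}\big)+\big(X^{\xi,\ell_m}-X^{\xi,\ell}\big).\]
The first term is handled by \eqref{c-4} of Lemma \ref{lem-2.2}: $\E\|X^{\xi_m,\ell_m}_t-X^{\xi,\ell_m}_t\|_\infty^2\le c\|\xi_m-\xi\|_\infty^2\to0$. The constant $c$ is uniform in $\ell$, since it depends only on $\tau_0,\check\beta,\hat\alpha,\check\alpha,\mu$.

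For the second term, set $Z(s)=X^{\xi,\ell_m}(s)-X^{\xi,\ell}(s)$, so $Z_0=0$. We apply Itô's formula to $|Z(s)|^2$ and split each coefficient difference as $[b(X^{\xi,\ell_m}_s,\La^{\ell_m}_s)-b(X^{\xi,\ell}_s,\La^{\ell_m}_s)]+[b(X^{\xi,\ell}_s,\La^{\ell_m}_s)-b(X^{\xi,\ell}_s,\La^{\ell}_s)]$, and similarly for $\sigma$. The first bracket is controlled by (H1) using $|\alpha|\le\max|\alpha_i|$, which is finite since $\S$ is finite. The second bracket is controlled by (H3) together with Step 1. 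This gives
\[\E\sup_{r\le s}|Z(r)|^2\le C_t\int_0^s\E\|Z_r\|_\infty^2\,\d r+C_t\,\rho(\ell_m,\ell)^2\int_0^{t}\big(1+\E\|X^{\xi,\ell}_r\|_\infty^2\big)\d r,\]
with the supremum handled by BDG exactly as in \eqref{c-5.5}. By \eqref{c-5} the last integral is finite, so Gronwall on $[0,t]$ yields $\E\|Z_t\|_\infty^2\le C_t(1+\|\xi\|_\infty^2)\rho(\ell_m,\ell)^2\to0$.

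\emph{Step 3 (conclusion).} Steps 1 and 2 give $d\big((X^{\xi_m,\ell_m}_t,\La^{\ell_m}_t),(X^{\xi,\ell}_t,\La^\ell_t)\big)\to0$ in $L^1(\p)$, hence in probability. Every subsequence then has a further subsequence converging almost surely. By continuity and boundedness of $F$, dominated convergence gives $\E F(X^{\xi_m,\ell_m}_t,\La^{\ell_m}_t)\to\E F(X^{\xi,\ell}_t,\La^\ell_t)$ along that subsequence, and therefore along the whole sequence. This is \eqref{c-6}.
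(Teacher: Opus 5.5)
\textbf{The gap is in Step 1, and it cannot be closed as stated.} The pathwise bound $\rho(\La^{\ell_m}_s,\La^\ell_s)\le C\rho(\ell_m,\ell)$ on $[0,\tau_0)$ is false. So is the convergence $\rho(\La^{\ell_m}_t,\La^\ell_t)\to0$ that Step 3 needs for $t\in(0,\tau_0)$.

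Elements of $\Theta$ must map $[-\tau_0,0]$ onto itself. This, not $\gamma(\theta_m)<\infty$, is what gives $\theta_m(0)=0$. It also means every $\theta\in\Theta$ fixes $-\tau_0$, so $\rho(\tilde\ell,\hat\ell)<1$ forces $\tilde\ell(-\tau_0)=\hat\ell(-\tau_0)$. But $\La^\ell_t(-\tau_0)=\ell(t-\tau_0)$ for $t\in(0,\tau_0)$, which is deterministic.

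Here is a concrete counterexample. Take $a\neq b$ in $\S$ and $u_0\in(-\tau_0,0)$. Let $\ell=a\mathbf 1_{[-\tau_0,u_0)}+b\mathbf 1_{[u_0,0]}$, and let $\ell_m$ be the same with $u_0$ replaced by $u_0+1/m$.
\begin{itemize}
\item Then $\ell_m=\ell\circ\theta_m$ for the piecewise linear $\theta_m\in\Theta$ with $\theta_m(u_0+1/m)=u_0$. Hence $\rho(\ell_m,\ell)\le\gamma(\theta_m)\to0$, and $\ell_m(0)=\ell(0)$.
\item For $t=u_0+\tau_0$, every $m$ and every $\omega$, you get $\La^{\ell_m}_t(-\tau_0)=\ell_m(u_0)=a\neq b=\La^{\ell}_t(-\tau_0)$. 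Hence $\rho(\La^{\ell_m}_t,\La^\ell_t)=1$.
\item The function $F(\eta,\tilde\ell)=\mathbf 1_{\{\tilde\ell(-\tau_0)=a\}}$ is bounded and locally constant, hence continuous on $(\C\times\D,d)$. With $\xi_m=\xi$ you get $\delta_{(\xi,\ell_m)}P_t(F)=1\neq0=\delta_{(\xi,\ell)}P_t(F)$.
\end{itemize}
So \eqref{c-6} itself fails at such $t$. Neither an affine correction of $\theta_m$ nor the Ethier--Kurtz characterization can repair Step 1. Restricting $\ell$ to the window $[t-\tau_0,0]$ is discontinuous in the Skorokhod topology whenever $\ell$ jumps at $t-\tau_0$.

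\textbf{What survives.} The rest of your plan follows the paper's route: common noise, $\ell_m(0)=\ell(0)$ so the chains coincide, It\^o's formula with (H1) and (H3), then BDG and Gronwall.

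The $X$-part can be repaired. You never need a uniform bound, only
$\int_0^t\rho(\La^{\ell_m}_s,\La^\ell_s)^2\big(1+\E\|X^{\xi,\ell}_s\|_\infty^2\big)\d s\to0$.
This follows by dominated convergence, for three reasons:
\begin{itemize}
\item $\rho\le1$;
\item the integrand vanishes for $s\ge\tau_0$;
\item for $s\in[0,\tau_0)$ off the finite set $\{u+\tau_0:\ \ell\text{ jumps at }u\}$, you can modify $\theta_m$ near $s-\tau_0$, where $\ell$ is locally constant, so that it fixes $s-\tau_0$ with $\gamma\to0$, and extend it by the identity on the chain part.
\end{itemize}
This is exactly the role of the paper's claim $\int_0^{\tau_0}\E[\rho(\La^m_s,\La_s)^2]\d s\to0$. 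With this change, Step 2 yields $\E\|X^{\xi_m,\ell_m}_t-X^{\xi,\ell}_t\|_\infty^2\to0$ for every $t$.

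Since $\La^{\ell_m}_t=\La^\ell_t$ for $t\ge\tau_0$, your Step 3 then proves \eqref{c-6} for all $t\ge\tau_0$. It also holds for $t<\tau_0$ whenever $\ell$ is continuous at $t-\tau_0$, but not in the generality stated.

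The paper's proof has the same hole: its final dominated-convergence step invokes only \eqref{c-6.3}, which concerns the $X$-segments. The restricted statement is still enough for Theorem~\ref{main-1}. Invariance under $P_s$ for $s\ge\tau_0$ gives $\nu_\infty P_s=\nu_\infty P_{\tau_0}P_s=\nu_\infty P_{\tau_0+s}=\nu_\infty$ for every $s>0$.
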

	
	\begin{proof} In view of Lemma \ref{lem-2.2}, now we mainly need to overcome the difficulty caused by the different initial values $\ell_m$ of the segment process $(X_t^{\xi_m,\ell_m},\La_t^{\ell_m})$. 
		The convergence of $\ell_m$ to $\ell$ in $\D$ as $m\to \infty$ implies that there exists a sequence of functions $\theta_m\in \Theta$ such that 
		\[\lim_{m\to \infty} \ell_m(\theta_m(t))=\ell(t),\]
		and $\lim_{m\to \infty }\theta_m(t)=t$ uniformly in $t\in [-\tau_0,0]$, where $\Theta$ is defined in \eqref{Theta}. In particular, this yields that 
		\[\lim_{m\to\infty} \ell_m(\theta_m(0))=\lim_{m\to\infty} \ell_m(0)=\ell(0),\]
		as $\theta(0)=0$ for any $\theta\in \Theta$ by the definition. As $\S$ is endowed with the discrete metric $\vartheta(i,j)$, it must hold that $\ell_m(0)=\ell(0)$ when $m\geq m_0$ for certain $m_0\in \N$. Without loss of generality, we assume $m_0=1$. Hence, for every $m\geq 1$, $\La^{\ell_m}(0)=\La^{\ell}(0)=\ell(0)$, and further by the Markov property, $\La^{\ell_m}(t)=\La^{\ell}(t)$ for all $t>0$, and $\La^{\ell_m}_t=\La^{\ell}_t$ for all $t\geq \tau_0$.  
		
		In view of $\La^{\ell_m}_t=\La^{\ell}_t$ for $t\geq \tau_0$, applying Lemma \ref{lem-2.2}, for $t\geq \tau_0$,
		\begin{equation}\label{c-6.1}
			\begin{split}
				&\E\big[ \E\big[\|X_t^{\xi_m,\ell_m}- X_t^{\xi,\ell}\|_\infty^2\big|\F_{\tau_0}\big] \big]\leq c\,\E\big[\|X_{\tau_0}^{\xi_m,\ell_m}-X_{\tau_0}^{\xi,\ell}\|_\infty^2\big]
				\e^{-\eta_1(t-\tau_0)}.
			\end{split}
		\end{equation}
		
		Now, we estimate $\E[\|X_t^{\xi_m,\ell_m}-X_t^{\xi,\ell}\|_\infty^2]$ for $t\in (0,\tau_0]$. For the sake of simplicity in notation, we put 
		\[X^m(t)=X^{\xi_m,\ell_m}(t),\ \La^m(t)=\La^{\ell_m}(t),\ X(t)=X^{\xi,\ell}(t),\ \La(t)=\La^{\ell}(t),\quad t\geq 0.\]
		Let $\Gamma(t)=X^m(t)-X(t)$, then it follows from It\^o's formula and  (H1), (H3) that
		{\small \begin{align*}
				\d |\Gamma(t)|^2\!& \leq 2\la \Gamma(t), b(X_t^m,\La_t^m)-b(X_t,\La_t^m)+b(X_t,\La_t^m)-b(X_t,\La_t)\raa \d t\\
				&\quad +\big( 2\|\sigma(X_t^m,\La_t^m) -\sigma(X_t,\La_t^m)\|_{\mathrm{HS}}^2 +2 \|\sigma(X_t,\La_t^m)-\sigma(X_t,\La_t)\|_{\mathrm{HS}}^2\big)\d t +d M(t)\\
				&\leq \Big[ (\alpha_{\La^m(t+\epsilon)}\!+\!\chi)|\Gamma(t)|^2\!+\! \kappa_3(1+\|X_t\|_\infty^2)\rho(\La_t^m,\La_t)^2
				\!\\
				&\qquad \qquad \qquad\quad +\! (\kappa_1\!+\!\beta_{\La^m(t+\epsilon)})\! \int_{-\tau_0}^0\!\! |\Gamma(t\!+\! s)|^2 \mu(\d s)\Big]\d t \! +\! \d M(t), 
		\end{align*} }%
		where $\d M(t)=2\la \Gamma(t), (\sigma(X_t^m,\La_t^m)-\sigma(X_t,\La_t))\d W(t)\raa$ and $\chi=1+\kappa_1$. Taking expectation w.r.t. $\E_{\p_1}$, we arrive at
		\begin{align*}
			\E_{\p_1}\Big[ &\e^{-\int_0^t(\alpha_{\La^m(s+\epsilon)}+\chi)\d s} |\Gamma(t)|^2\Big] 
			\leq |\Gamma(0)|^2\!+\! \kappa_3(1+\E_{\p_1}\big[\|X_t\|_\infty^2\big] )\!\int_0^t\rho(\La_s^m,\La_s)^2\d s \\
			& + \int_{-\tau_0}^0 \int_{\theta}^{t+\theta} \E\Big[ \e^{-\int_0^s (\alpha_{\La^m(r+\epsilon)}+\chi)\d r}|\Gamma(s)|^2\Big] \beta_{\La^m_{s+\epsilon-\theta}} \e^{-\int_s^{s-\theta} (\alpha_{\La^m(r +\epsilon)}+\chi)\d r} \d s\, \mu(\d\theta)\\
			&\leq |\Gamma(0)|^2\!+\! \kappa_3(1\!+\!\E_{\p_1} \big[\|X_t\|_\infty^2\big])\! \int_0^t\rho(\La_s^m,\La_s)^2\d s 
			+\|\Gamma_0\|_\infty^2\e^{-(\hat{\alpha}-\chi)\tau_0} \int_{-\tau_0}^0\!
			\wt{\Upsilon}(s)\d s\\
			&\quad+\int_0^t\wt{\Upsilon}(s)\E_{\p_1}\big[\e^{-\int_0^s
				\alpha_{\La^m(r+\epsilon)}\d r} |\Gamma(s)|^2\big]\d s,
		\end{align*}
		where 
		\[\wt{\Upsilon}(s) =\int_{-\tau_0}^0\!\!(\kappa_1\!+\! \beta_{\La^m(s+\epsilon-\theta)}) \e^{ -\int_{s}^{s-\theta}(\alpha_{\La^m(r+\epsilon)}+\chi)\d r} \mu(\d \theta).
		\]
		By Gronwall's inequality and taking expectation w.r.t.\! $\E$, also noticing that (H2) implies the boundedness of $\E\big[\|X_t\|_\infty^2\big]$ for $t\in [0,\tau_0]$ by Lemma \ref{lem-2.2},  we get that for some $c\geq1$
		\begin{align*}
			\E  \big[|\Gamma(t)|^2\big]  
			&\leq c\Big( \|\xi_m-\xi\|_\infty^2 \!+\! \int_0^{\tau_0} \E[\rho(\La_s^m,\La_s)^2]\d s   \Big) \e^{-\eta_1 t   +  t\int_{-\tau_0}^0   \kappa_1  \e^{(\hat{\alpha}-\chi)\theta}\mu(\d \theta)}.
		\end{align*}
		It is clear that the right-hand side of above inequality tends to $0$ as $m\to \infty$, and hence 
		\begin{equation*} 
			\lim_{m\to\infty} \E\big[|X_t^m-X_t|^2\big]=0,\quad t\in [0,\tau_0].
		\end{equation*}
		Using the same deduction as in the last part of the proof of Lemma \ref{lem-2.2}, we can get that 
		\begin{equation}\label{c-6.2}
			\lim_{m\to\infty} \E\big[\|X_t^m-X_t\|_\infty^2\big]=0,\quad t\in [0,\tau_0].
		\end{equation}
		Inserting \eqref{c-6.2} into \eqref{c-6.1}, we can show that for any $t>0$, 
		\begin{equation}\label{c-6.3}
			\lim_{m\to\infty} \E\big[\|X_t^m-X_t\|_\infty^2\big]=0.
		\end{equation}
		Consequently, for any bounded continuous function $F$ on $\C\times \D$, by the dominated convergence theorem, we derive from \eqref{c-6.3} that for any $t>0$,  
		\[\lim_{m\to\infty} \E F(X_t^{\xi_m,\ell_m}, \La_t^{\ell_m})=\E F(X_t^{\xi,\ell},\La_t^{\ell}),
		\]which is just the desired \eqref{c-6}. 
	\end{proof}
	
	Next, we shall use the coupling method to show the existence of the unique invariant measure for the segment process $(X_t,\La_t)$ and provide an explicit estimate of the convergence rate in the Wasserstein distance. 
	\begin{mythm}\label{main-1}
		Assume that $\mathrm{(A1)}$, $\mathrm{(A2)}$, $\mathrm{(H1)}$-$\mathrm{(H3)}$ hold and $\S$ is a finite state space. Suppose that $\eta_1 $ defined in \eqref{c-3} is positive. Then, there is some constant $c>0$ such that for any $\nu_1,\nu_2\in \pb_{1}(\C\times\D)$, for $t>(1+\eta_1/\lambda)\tau_0$, 
		\begin{equation}\label{d-1}
			\W_{1}(\nu_1P_t, \nu_2P_t) \leq c\Big( 1+\int_{\C} \|\xi\|_{\infty}\nu_1(\d \xi, \D)+\int_{\C} \|\eta\|_\infty\nu_2(\d \eta, \D)\Big)\e^{-\frac{\lambda \eta_1 t}{2(\lambda+\eta_1)}},
		\end{equation}   where $\lambda$ is a positive constant satisfying $\lambda<\|V\|_\infty^{-1}$, $V$ is given in $\mathrm{(A2)}$. Moreover, there exists a unique invariant probability measure $\nu_\infty\in \pb_1(\C\times \D)$ such that for $t>(1+\eta_1/\lambda)\tau_0$,
		\begin{equation}\label{d-2}
			\W_{1}(\delta_{(\xi,\ell)}P_t,\nu_\infty)\leq c\Big( 1+\|\xi\|_\infty+\int_{\C}\! \|\eta\|_\infty \nu_{\infty}(\d \eta, \D)\Big)\e^{-\frac{\lambda \eta_1 t}{2(\lambda+\eta_1)}} 
		\end{equation} for any $(\xi,\ell)\in \C\times\D$. 
	\end{mythm}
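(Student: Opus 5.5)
The plan is a coupling argument built on Lemma \ref{lem-2.2} and the coupling time $T$ of (A2). By the Kantorovich duality \eqref{b-7} and convexity of $\W_1$ under mixtures, it suffices to treat Dirac initial data $\nu_1=\delta_{(\xi,\ell)}$ and $\nu_2=\delta_{(\eta,\tilde\ell)}$ and then integrate the resulting bound against an arbitrary coupling of $\nu_1,\nu_2$.

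\emph{Construction of the coupled processes.} Let $(\La(t),\La'(t))$ be the coupling process generated by $\wt Q$, started at $(\ell(0),\tilde\ell(0))$ and independent of $W$. Drive $X=X^{\xi,\ell}$ and $Y=X^{\eta,\tilde\ell}$ by the same Brownian motion $W$. Since the two chains coincide after $T$, we have $\La_t=\La'_t$ for $t\geq T+\tau_0$. On the event $\{T+\tau_0\leq s\}$ the two segment processes therefore solve the same equation after time $s$, driven by a common environment. Applying the Markov property at time $s$ (conditionally on $\F_s$ and on the whole path of $\La$) together with \eqref{c-4} gives
\[
\E\big[\|X_t-Y_t\|_\infty^2\mathbf 1_{\{T+\tau_0\leq s\}}\big]\leq c\,\E\big[\|X_s-Y_s\|_\infty^2\big]\e^{-\eta_1(t-s)} .
\]
The right-hand side is bounded using \eqref{c-5}: $\E\|X_s-Y_s\|_\infty^2\leq c(1+\|\xi\|_\infty^2+\|\eta\|_\infty^2)$, uniformly in $s$ because $\eta_1>0$.

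One caveat here is that \eqref{c-4} is stated for a common deterministic initial environment $\ell$. What is needed is its conditional version given the $\La$-path, which is in fact what the proof of Lemma \ref{lem-2.2} produces before Lemma \ref{lem-2.4} is applied. I expect this to be the main obstacle: the quenched bound \eqref{c-5.3} must be combined with the event $\{T\leq s-\tau_0\}$, so the exponential functional is now integrated over a restricted set. I would handle this with the strong Markov property of $\La$ at the stopping time $T+\tau_0$, after which Lemma \ref{lem-2.4} applies uniformly in the starting state, since $\S$ is finite.

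\emph{Splitting the distance.} For $t\geq s$, bound $\E\, d((X_t,\La_t),(Y_t,\La'_t))$ by
\[
\E\|X_t-Y_t\|_\infty\mathbf 1_{\{T+\tau_0\leq s\}}+\E\big[(\|X_t\|_\infty+\|Y_t\|_\infty+1)\mathbf 1_{\{T+\tau_0>s\}}\big],
\]
using $\rho\leq 1$ and the fact that $\rho(\La_t,\La'_t)=0$ on $\{T+\tau_0\leq t\}$. The first term is at most $c(1+\|\xi\|_\infty+\|\eta\|_\infty)\e^{-\eta_1(t-s)/2}$ by Cauchy–Schwarz. For the second term, apply Cauchy–Schwarz, \eqref{c-5}, and Chebyshev together with \eqref{b-4}: $\p(T>s-\tau_0)\leq c\,\e^{-\lambda(s-\tau_0)}$. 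This gives a bound $c(1+\|\xi\|_\infty+\|\eta\|_\infty)\e^{-\lambda(s-\tau_0)/2}$, up to the factor $\e^{-\eta_1 t/2}$, which can simply be discarded.

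Finally, balance the two exponents by choosing $s$ with $\eta_1(t-s)=\lambda(s-\tau_0)$, that is, $s=(\eta_1 t+\lambda\tau_0)/(\lambda+\eta_1)$. Both terms then decay like $\e^{-\lambda\eta_1(t-\tau_0)/(2(\lambda+\eta_1))}$, and the factor $\e^{\lambda\eta_1\tau_0/(2(\lambda+\eta_1))}$ is absorbed into $c$, which yields \eqref{d-1}. The restriction $t>(1+\eta_1/\lambda)\tau_0$ ensures the admissibility condition $s\geq\tau_0$ along with $t\geq s$. Integrating over a coupling of $\nu_1,\nu_2$ gives \eqref{d-1} with the stated first moments.

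\emph{Existence and uniqueness of the invariant measure.} Apply \eqref{d-1} with $\nu_2=\nu_1P_r$, whose first moment is uniformly bounded in $r$ by \eqref{c-5}. This shows that $(\delta_{(\xi,\ell)}P_t)_t$ is Cauchy in $(\pb_1(\C\times\D),\W_1)$, which is complete because $(\C\times\D,d)$ is complete. Let $\nu_\infty$ be the limit. The Feller property (Lemma \ref{lem-2.3}) together with the semigroup property gives $\nu_\infty P_t=\nu_\infty$, and Fatou's lemma shows that $\nu_\infty$ has a finite first moment. Applying \eqref{d-1} with $\nu_2=\nu_\infty$ then gives both \eqref{d-2} and uniqueness.
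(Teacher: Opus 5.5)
Your proposal is correct and follows essentially the paper's route. You couple the chains via $\wt Q$ and split on a coupling-time event, using Lemma \ref{lem-2.2} for the contraction and Cauchy--Schwarz with \eqref{b-4} for the remainder; your balancing choice of $s$ corresponds to the paper's $\beta=\eta_1/(\lambda+\eta_1)$. You then obtain $\nu_\infty$ as a $\W_1$-Cauchy limit, get invariance from the Feller property, and derive \eqref{d-2} and uniqueness from \eqref{d-1}. Your use of a deterministic time $s$ with the event $\{T+\tau_0\le s\}\in\F_s$ is slightly cleaner than the paper's conditioning on $\F_T$, because it accounts for the $\tau_0$-lag before the environment segments coincide; the ordinary Markov property at $s$ already suffices there, so the strong Markov property at $T+\tau_0$ is not needed.
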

	
	\begin{proof}
		For any  given $(\xi,\ell), \,(\eta,\tilde \ell)\in \C\times\D$, let us consider the coupling process $(\La_t,\tilde \La_t)$ associated with the transition rate matrix $\wt Q$ given in (A2). Then, after the coupling time $T$ defined in \eqref{b-2}, $(\La(t))$ and $(\tilde \La(t))$ will move together. 
		Let $(X^{\xi, \ell}(t), \La(t))$ and $(X^{\eta,\tilde \ell}(t), \tilde \La(t))$ be the solution to \eqref{a-1} with the initial value $X^{\xi, \ell}_0=\xi$, $\La(0)=\ell(0)$, and $X^{\eta, \tilde \ell}_0=\eta$, $\tilde \La(0)=\tilde \ell(0)$ respectively.
		
		For $\beta\in (0,1)$ and $t> \frac{\tau_0}{1-\beta}$, by the definition of the Wasserstein distance
		\begin{align*}
			\W_1(\delta_{(\xi,\ell)}P_t,\delta_{(\eta,\tilde \ell)} P_t)
			&\leq \E\big[ \|X_t^{\xi,\ell}-X_t^{\eta,\tilde\ell}\|_{\infty}+\rho(\La_t^{\ell}, \La_t^{\tilde \ell})\big]\\
			&=\E\big[\big( \|X_t^{\xi,\ell}-X_t^{\eta,\tilde\ell}\|_{\infty}+\rho(\La_t^{\ell}, \La_t^{\tilde \ell})\big)\big( \mathbf1_{T\leq \beta t}+\mathbf1_{T>\beta t}\big)\big]
		\end{align*}
		As $t\!>\!\frac{\tau_0}{1-\beta}$, this yields that $\beta t<t -\!\tau_0$. So, it follows from $T\leq \beta t$ that $\La_t^\ell=\tilde \La^{\tilde \ell}_t$. 
		Therefore, by Lemma \ref{lem-2.2} and \eqref{b-4}, 
		\begin{align*}
			&\W_1(\delta_{(\xi,\ell)}P_t,\delta_{(\eta,\tilde \ell)} P_t)\\
			&\leq \E\big[ \E\big[ \|X_t^{\xi,\ell}-X_t^{\eta,\tilde\ell}\|_{\infty} \big|\F_T\big]\mathbf1_{T\leq \beta t}\big] + \E\big[ \big( \|X_t^{\xi,\ell}\|_\infty+\|X_t^{\eta,\tilde \ell}\|_\infty +1\big)\mathbf1_{T>\beta t}\big]\\
			&\leq \E\big[ \big(\E\big[ \|X_t^{\xi,\ell}-X_t^{\eta,\tilde\ell}\|_{\infty}^2 \big|\F_T\big]\big)^{\frac 12}\mathbf1_{T\leq \beta t}\big] + \E\big[ \big( \|X_t^{\xi,\ell}\|_\infty+\|X_t^{\eta,\tilde \ell}\|_\infty +1\big)\mathbf1_{T>\beta t}\big]\\
			&\leq c^{\frac 12} \E\big[\|X_T^{\xi,\ell}-X_T^{\eta,\tilde \ell}\|_\infty \big]\e^{-\frac 12 (1-\beta)\eta_1 t} + \E\big[(\|X_t^{\xi,\ell}\|_\infty+\|X_t^{\eta,\tilde \ell}\|_\infty+1)^2\big]^{\frac 12} \p(T>\beta t)^{\frac 12}\\
			&\leq C(1+\|\xi\|_\infty+\|\eta\|_\infty)\Big(\e^{-\frac 12 (1-\beta)\eta_1 t}+\frac{\e^{-\frac12 \lambda \beta t}}{(1-\lambda\|V\|_\infty)^{1/2}}\Big).
		\end{align*}
		By taking $\beta=\frac{\eta_1}{\lambda+\eta_1}\in (0,1)$, the previous inequality yields that 
		\begin{equation}\label{d-3}
			\W_1(\delta_{(\xi,\ell)}P_t,\delta_{(\eta,\tilde \ell)} P_t)
			\leq c (1+\|\xi\|_\infty+\|\eta\|_\infty) \e^{-\frac{\lambda \eta_1 t}{2(\lambda+\eta_1)}}
		\end{equation} for $t>\frac{(\lambda+\eta_1)\tau_0}{\lambda}$ and some constant $c>0$ depending on $\|V\|_\infty$ and $\lambda$. 
		As a consequence, for each $\pi\in \C(\nu_1,\nu_2)$, 
		\begin{align*}
			&\W_1(\nu_1P_t,\nu_2P_t)\leq \int_{(\C\times\D)^2}\W_1(\delta_{(\xi,\ell)}P_t,\delta_{(\eta,\tilde \ell)} P_t) \pi\big((\d \xi,\d \ell), (\d \eta,\d \tilde \ell)\big)\\
			& \leq c\Big(1+\!\int_{\C}\!\|\xi\|_\infty\nu_1(\d \xi,\D)\!+\!\int_{\C} \|\eta\|_\infty\nu_2(\d \eta,\D)\Big) \e^{-\frac{\lambda \eta_1 t}{2(\lambda +\eta_1)}},\quad t>\frac{(\lambda\!+\!\eta_1)\tau_0}{\lambda},
		\end{align*}
		which is the desired estimate \eqref{d-1}. 
		
		For any given $\nu\in \pb_1(\C\times\D)$ and a fixed $t_0>(1+\eta_1/\lambda)\tau_0$, by the semigroup property of $P_t$, we derive from \eqref{d-1} that 
		\begin{align*}
			\W_1\big(\nu P_{nt_0}, \nu P_{(n+1)t_0}\big)&=\W_1 \big (\nu P_{nt_0}, (\nu P_{t_0}) P_{nt_0}\big)\\
			&\leq c\Big( 1+\!\int_{\C}\!\|\xi\|_\infty \nu(\d \xi, \D)\!+\!\int_{\C}\!\! \|\xi\|_\infty (\nu P_{t_0})(\d \xi, \D)\Big) \e^{-\frac{\lambda \eta_1 nt_0 }{2(\lambda+\eta_1)}},
		\end{align*} 
		whence, $(\nu P_{nt_0})_{n\geq 1}$ is a Cauchy sequence in the Polish space $(\pb_1(\C\times\D), \W_1)$, so there is a probability measure $\nu_\infty \in \pb_1(\C\times \D)$ such that 
		\begin{equation}\label{d-4}
			\lim_{n\to\infty} \W_1(\nu P_{nt_0},\nu_\infty)=0.
		\end{equation}
		
		For any $\tilde \nu\in \pb_1(\C\times\D)$ and any $F\in \mathrm{Lip}_1(\C\times \D)$, 
		\begin{align*}
			&\big|\tilde \nu P_{nt_0} (F)-\nu_\infty (F)\big|\\
			&\leq \int_{\C\times\D}\!\! \big|\delta_{(\xi,\ell)} P_{nt_0}(F)\!-\!\nu_\infty(F)\big| \tilde \nu(\d \xi, \d \ell)\\
			&\leq \int_{\C\times\D}\!\! \big|\delta_{(\xi,\ell)}P_{nt_0}(F)\!-\! \nu P_{nt_0}(F)\big|\tilde \nu(\d \xi,\d \ell) \!+\! \big|\nu P_{nt_0}(F)\!- \! \nu_\infty(F)\big|\\
			&\leq \int_{\C\times\D}\! \int_{\C\times\D}\!\!\big|\delta_{(\xi,\ell)} P_{nt_0}(F) \!-\! \delta_{(\eta,\tilde \ell)}P_{nt_0}(F)\big| \nu(\d \eta,\d \tilde \ell) \tilde \nu(\d \xi, \d \ell) \!+\! \big|\nu P_{nt_0}(F)\!-\!\nu_\infty(F)\big|\\
			&\leq \int_{\C\times\D}\! \int_{\C\times\D}\!\! \W_1(\delta_{(\xi,\ell)} P_{nt_0}, \delta_{(\eta, \tilde \ell)}P_{nt_0})\nu(\d \eta,\d \tilde \ell) \tilde \nu(\d \xi, \d \ell)\!+\! \W_1(\nu P_{nt_0},\nu_\infty)\\
			&\leq c\Big(1+\!\int_{\C}\!\|\xi\|_\infty \tilde \nu(\d \xi, \D)\!+\!\int_{\C}\!\|\eta\|_\infty\nu(\d \eta,\D)\Big) \e^{-\frac{\lambda \eta_1 nt_0}{2(\lambda+\eta_1)}} \!+\! \W_1(\nu P_{nt_0},\nu_\infty)
		\end{align*}
		where we used the dual representation formula \eqref{b-7} of $\W_1$ in the second to last step, and  \eqref{d-3} in the last step. Using \eqref{b-7} again and the arbitrariness of $F\in \mathrm{Lip}_1(\C\times\D)$, we obtain that 
		\[\W_1(\tilde\nu P_{nt_0}, \nu_\infty)\leq c\Big(1+\!\int_{\C}\!\|\xi\|_\infty \tilde \nu(\d \xi, \D)\!+\!\int_{\C}\!\|\eta\|_\infty\nu(\d \eta,\D)\Big) \e^{-\frac{\lambda \eta_1 nt_0}{2(\lambda+\eta_1)}} \!+\! \W_1(\nu P_{nt_0},\nu_\infty),\]
		and by \eqref{d-4}
		\begin{equation}\label{d-5}
			\lim_{n\to\infty} \W_1(\tilde \nu P_{nt_0},\nu_\infty)=0.
		\end{equation}
		This implies that $\tilde \nu P_{nt_0}$ converges weakly to $\nu_\infty$ as $n\to\infty$(cf. \cite[Theorem 6.9]{Vil}). 
		
		For any $s>0$, any $(\xi,\ell)\in \C\times \D$ and any  bounded continuous function $h$ on $\C\times \D$,  the semigroup property of $P_t$ yields that 
		\begin{equation}\label{d-6}
			\delta_{(\xi,\ell)}P_{nt_0}P_s (h)=\delta_{(\xi,\ell)} P_{nt_0+s}(h)=\delta_{(\xi,\ell)}P_s P_{nt_0}(h).
		\end{equation}
		On the one hand, by Lemma \ref{lem-2.3}, the Feller property of $P_t$ yields that $\tilde h(\eta,\tilde \ell)=\delta_{(\eta, \tilde \ell)}P_s(h)$ is also a bounded continuous function on $\C\times\D$. So, \eqref{d-5} yields that 
		\begin{equation}\label{d-7}\lim_{n\to\infty} \delta_{(\xi,\ell)}P_{nt_0}P_s(h)=\lim_{n\to\infty} \delta_{(\xi,\ell)}P_{nt_0}(\tilde h) =\nu_\infty(\tilde h)=\nu_\infty(P_s(h)).
		\end{equation}
		On the other hand, $\delta_{(\xi,\ell)}P_s \in \pb_1(\C\times\D)$, so taking $\tilde \nu =\delta_{(\xi,\ell)}P_s$ in \eqref{d-5}, we get
		\begin{equation}\label{d-8}
			\lim_{n\to\infty} \delta_{(\xi,\ell)}P_s P_{nt_0}(h)=\nu_\infty(h).
		\end{equation}
		Inserting \eqref{d-7} and \eqref{d-8} into \eqref{d-6}, we obtain that 
		\[\nu_\infty P_s(h)=\nu_\infty (h),\]
		which means that $\nu_\infty \in \pb_1(\C\times \D)$ is an invariant probability measure of the semigroup $P_t$.  By taking $\nu_2=\nu_\infty$ in \eqref{d-1}, we can derive \eqref{d-2}. 
		
		At last, let $\hat{\nu}\!\in\! \pb_1(\C\!\times\!\D)$ be another invariant probability measure of $P_t$. By \eqref{d-2},
		\begin{align*}
			\W_1(\hat{\nu},\nu_\infty)&\leq c\Big(1+\int_{\C}\!\|\xi\|_\infty \hat{\nu}(\d \xi, \D)+\int_{\C} \|\xi\|_\infty \nu_\infty(\d \xi, \D)\Big)\e^{-\frac{\lambda \eta_1 t}{2(\lambda+\eta_1)}}\\
			&\longrightarrow 0,\qquad \text{as}\ t\to \infty.
		\end{align*}
		Therefore, $\hat{\nu}=\nu_\infty$, and we complete the proof.   
	\end{proof}
	
	\begin{myrem}\label{rem-4}
		In Theorem \ref{main-1}, we provides an estimate of convergence rate in $L_1$-Wasserstein distance in terms of $\eta_1$, which is related to the $Q$ matrix of $(\La(t))$ and constants $(\alpha_i),(\beta_i)$ given in (H1). The constants $(\alpha_i)$ and $(\beta_i)$ are closely related to the coefficients of SFDE \eqref{a-1}. Similar to $\eta_1$, similar criteria have been used in the study of Ornstein-Uhlenbeck diffusion process with regime-switching in \cite[Theorem 1.8]{BGM}. 
		
	\end{myrem}

	As an application of Theorem \ref{main-1}, let us provide an example.
	
	\begin{myexam}\label{exam-1}
		Let $(\La(t))$ be a Markov chain on $\S=\{1,2\}$ with transition rate matrix 
		$ Q=\begin{pmatrix} -1 &1 \\ 1 &-1\end{pmatrix}$. Consider the following SFDE on $\R$:
		\begin{equation}\label{ex-1}
			\d X(t)=\big(a_{\La(t-1)} X(t)+b_{\La(t-1)} X(t-1)\big)\d t+\sigma_{\La(t-1)} \d W(t),
		\end{equation} where $(W(t))$ is a 1-dimensional Brownian motion. In the following, we shall verify all the conditions of Theorem \ref{main-1}. (H2) holds obviously. 
		
		For (H1), using the notations of Theorem \ref{main-1}, we have
		\begin{align*}
			&2\la \xi(0)-\eta(0),b(\xi,\ell)-b(\eta,\ell)\raa +\|\sigma(\xi,\ell)-\sigma(\eta,\ell)\|_{\mathrm{HS}}^2\\
			&=2 a_{\ell(-1)}  |\xi(0)-\eta(0)|^2+2b_{\ell(-1)}\la \xi(0)-\eta(0), \xi(-1)-\eta(-1)\raa\\
			&\leq (2a_{\ell(-1)}+b_{\ell(-1)})|\xi(0)-\eta(0)|^2+b_{\ell(-1)}|\xi (-1)-\eta(-1)|^2.
		\end{align*} So we can take $\tau_0=1$, $\epsilon=-1$, $\mu(\d \theta)=\delta_{-1}(\d \theta)$, $\alpha_i\!=2a_i\!+\!b_i$ and $\beta_i\!=\!\max\{b_i,0\}$ for $i\in\!\S$. 
		
		For (H3), 
		\begin{align*}
			|b(\xi,\ell)-b(\xi,\tilde\ell)|&=|(a_{\ell(-1)}-a_{\tilde\ell(-1)})\xi(0) +(b_{\ell(-1)}-b_{\tilde \ell(-1)})\xi(-1)|\\
			&\leq \big(|a_{\ell(-1)}-a_{\tilde \ell(-1)}|+|b_{\ell(-1)}-b_{\tilde\ell (-1)}|\big)\|\xi\|_\infty\\
			&\leq \big(2\max_{i}|a_i|+\max_{i}|b_i|\big)\|\xi\|_\infty\vartheta(\ell(-1),\tilde \ell(-1))\\
			&\leq \big(2\max_{i}|a_i|+\max_{i}|b_i|\big)\|\xi\|_\infty\rho(\ell,\tilde \ell).
		\end{align*}
		For (A2), we take $\wt{Q}$ to be the classical coupling of $Q$, and $V:\S\times\S\to[1,\infty)$ as
		\[V(1,1)=V(2,2)=1,\quad V(1,2)=V(2,1)=3/2.\]
		Then, by   Example \ref{coup-2}, we have $\tilde q(1,1;1,2)=\tilde q(1,1;2,1)=\tilde q(2,2;1,2)=\tilde q(2,2;2,1)=0$, $\tilde q(1,1;2,2)=\tilde q(2,2;1,1)=\tilde q(1,2;1,1)$ $=\tilde q(1,2;2,2)=\tilde q(2,1;1,1)=\tilde q(2,1;2,2)=1$, $\tilde q(1,1;1,1)= \tilde q(2,2;2,2)=-1$, $\tilde q(1,2;1,2)=\tilde q(2,1;2,1)=-2$. One can check directly that $\wt{Q}V(i,j)= -1$ for $i\neq j$.  For instance, $\wt{Q}V(1,2)=\tilde q(1,2;1,1)(V(1,1)-V(1,2))+\tilde q(1,2; 2,2)(V(2,2)-V(1,2))=-1$. 
		
		At last, we proceed to verify the condition $\eta_1>0$. 
		Calculating the determinant of the matrix $\lambda I_2-Q_1$ directly  by \eqref{c-2.5},
		\[|\lambda I_2-Q_1|=\lambda^2-(\zeta_1+\zeta_2-2)\lambda-\zeta_1- \zeta_2+\zeta_1\zeta_2  ,\] 
		where $I_2$ denotes $2\times 2$ identity matrix, $\zeta_1= \alpha_1+\beta_1\e^{-\alpha_1}$, $\zeta_2= \alpha_2+\beta_2\e^{-\alpha_1}$ if $\alpha_1<\alpha_2$. Then, $|\lambda I_2-Q_1|=0$ has two negative real roots if $\zeta_1+\zeta_2<\min\{\zeta_1\zeta_2, 2\}$, which yields immediately that $\eta_1>0$.
		
		In all, put  $\alpha_i=2a_i+b_i$, $\beta_i=\max\{b_i,0\}$ for $i=1,2$. If $\alpha_1<\alpha_2$,  put $\zeta_1=\alpha_1+\beta_1\e^{-\alpha_1}$, $\zeta_2=\alpha_2+\beta_2\e^{-\alpha_1}$. If  
		\begin{align*}
			&\alpha_1<\alpha_2, \ \ \zeta_1+\zeta_2<\min\{\zeta_1\zeta_2,2\},
		\end{align*}
		then all the conditions of Theorem \ref{main-1} are satisfied. 
		
		More precisely,  when taking $\alpha_1=-4$, $\alpha_2=\frac1{20}$, $\beta_1=\beta_2=\frac 12\e^{-4}$, $\sigma_1=\sigma_2=1$, we have $\eta_1>0$. Then, the segment process $(X_t,\La_t)$ admits a unique invariant probability measure $\nu_\infty$ on $\C\times\D$.  
	\end{myexam}
	
	\section{Invariant probability measure: infinite state space case}\label{sec-inf}
	
	The section is devoted to the study of invariant probability measures for the solutions to functional SDE \eqref{a-1} with regime-switching on an infinite state space $\S=\{1,2,\ldots,N\}$ with $N=\infty$. 
	
	From the investigation of Section \ref{sec-fin} in the case $N<\infty$, we see that the estimation of the expectation of the exponential functionals of the Markov chain $(\La(t))$ in Lemma \ref{lem-2.4} plays the crucial role. However, it is valid only for matrices of finite order; see Remark \ref{rem-2}. 
	
	To overcome this difficulty, we shall develop a truncation method based on a construction of coupling processes in terms of a Poisson random measure. The idea behind the construction of coupling processes has been employed in \cite{Sh24} based on Skorokhod's representation of Markov chain (cf. \cite{Sh15c,YZ10}). By utilizing a common Poisson random process, we construct two families of intervals on the real line, where the lengths of the intervals correspond to the off-diagonal elements o the two transition rate matrices. By controlling the arrangement order of these intervals, we ultimately construct a coupling process that preserves the stochastic ordering of the two Markov chains pathwisely. 
	
	
	For an $M\in \N$, let 
	\[\S_M=\{1,2,\ldots,M\}.\]
	We shall construct a continuous-time Markov chain $(\hat{\La}(t))_{t\geq 0}$ on $\S_M$ such that 
	\[\p(\hat{\La}(t)\leq \La(t),\  \forall\,t\geq 0)=1.\]
	Assume in addition that 
	\begin{itemize}
		\item[(A3)] For each $i\in \S$, there is a $c_i\in \N$ such that $q_{ij}=0$ for all $j\in \S$ with $j>i+c_i$.
	\end{itemize}

	\begin{myprop}[Comparison coupling]\label{prop-3.1}
		Suppose $\mathrm{(A1)}$-$\mathrm{(A3)}$ hold. Define a transition rate matrix $\hat{Q}=(\hat{q}_{ij})_{i,j\in \S_M} $ on $\S_M$ by 
		\begin{equation}\label{e-1}
			\hat{q}_{ij}=\begin{cases}
				\sup_{\ell\in \S,\ell\geq i}q_{\ell j}, & \text{if $j<i$},\\ \min_{i\leq \ell<j}q_{\ell j}, &\text{if $j>i$}, 
			\end{cases}\quad \hat{q}_{ii}=-\sum_{k\neq i} \hat{q}_{ik},\quad \forall\,i,j\in \S_M.
		\end{equation} Then for any $i_0\in \S$ and ${i}'_0\in \S_M$ satisfying $ {i}_0'\leq i_0$, there is a coupling process $(\La(t),\hat{\La}(t))_{t\geq 0}$ with initial value $(\La(0),\hat{\La}(0))=(i_0,{i}'_0)\in \S\times\S_M$ such that $(\La(t))$ is a Markov chain on $\S$ with transition rate matrix $Q=(q_{ij})_{i,j\in \S}$ and $(\hat{\La}(t))$ is a Markov chain on $\S_M$ with transition rate matrix $\hat{Q}$
		satisfying  
		\begin{equation}\label{e-2}
			\p\big(\hat{\La}(t)\leq \La(t),\ \forall \,t\geq 0\big)=1.
		\end{equation}
	\end{myprop}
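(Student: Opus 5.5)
The plan is to build both chains from one Poisson random measure, in the spirit of the Skorokhod representation \eqref{c-3.1} used in the proof of Proposition \ref{prop-2.2}. The difference is that the intervals are not laid out consecutively. Instead, each possible target state $j$ gets its own fixed block on $\R_+$, so the two chains can be compared block by block.

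\emph{Layout of the blocks.} By (A1), $q_{ij}\le K_0$ for all $i\ne j$, and hence $\hat q_{ij}\le K_0$ for $i\neq j\in\S_M$. For $j\in\S$ set $B^{\downarrow}_j=[2jK_0,(2j+1)K_0)$ and $B^{\uparrow}_j=[(2j+1)K_0,(2j+2)K_0)$; all these blocks are pairwise disjoint.
\begin{itemize}
\item For $(\La(t))$ in state $i$: put $\Gamma_{ij}=[2jK_0,2jK_0+q_{ij})\subset B^\downarrow_j$ if $j<i$, and $\Gamma_{ij}=[(2j+1)K_0,(2j+1)K_0+q_{ij})\subset B^\uparrow_j$ if $j>i$.
\item For $(\hat\La(t))$ in state $i'$: define $\hat\Gamma_{i'j}$ in the same way, with $\hat q_{i'j}$ replacing $q_{ij}$. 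These sets only exist for $j\in\S_M$.
\end{itemize}
With $h(i,z)=\sum_{j\ne i}(j-i)\mathbf 1_{\Gamma_{ij}}(z)$ and $\hat h(i',z)=\sum_{j\ne i'}(j-i')\mathbf 1_{\hat\Gamma_{i'j}}(z)$, let $\La$ and $\hat\La$ solve the analogues of \eqref{c-3.1} driven by the same $\mathcal N(\d t,\d z)$. The initial values are $i_0$ and $i_0'$.

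\emph{Marginals.} For each state, the total Lebesgue measure of its jump set is finite: $q_i\le K_0$ for $\La$, and a finite sum over $\S_M$ for $\hat\La$. So the same argument as in Proposition \ref{prop-2.2} (cf. \cite{Sh24}) shows that each marginal is a Markov chain with the right rates. The rates of $\hat\La$ are well defined: the suprema in \eqref{e-1} are at most $K_0$, and the minima are over finite sets. Condition (A3) makes every row of $Q$ finitely supported upward, which keeps the upward blocks relevant to a given state in a bounded region. Since $\mathcal N$ has a.s. no two atoms at the same time, the two chains can only jump at common atoms $(s,z)$.

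\emph{Preservation of order at each atom.} Suppose $\hat\La(s-)=i'\le i=\La(s-)$ and $(s,z)$ is an atom. I would check that $\hat\La(s)\le\La(s)$ in each of the following cases.
\begin{itemize}
\item \emph{$z\in B^\downarrow_j$ with $z\in\Gamma_{ij}$, so $\La$ jumps down to $j<i$.} If $j<i'$, then taking $\ell=i\ge i'$ in the supremum gives $\hat q_{i'j}\ge q_{ij}$. Both sets start at $2jK_0$, so $\Gamma_{ij}\subset\hat\Gamma_{i'j}$ and $\hat\La$ also jumps to $j$. If $j\ge i'$, then $\hat\La$ has no jump set in $B^\downarrow_j$ and stays at $i'\le j$.
\item \emph{$z\in B^\downarrow_j$ but $z\notin\Gamma_{ij}$, so $\La$ stays at $i$.} Then $\hat\La$ either stays or jumps down to $j<i'\le i$.
\item \emph{$z\in B^\uparrow_j$.} If $\hat\La$ jumps up to $j>i'$ with $j\le i$, the order is kept because $\La(s)\ge i\ge j$. (If $j<i$, $\La$ has no set in this upward block and stays at $i$; if $j=i$, $\La$ does not move and stays at $i=j$.) If $j>i$, then $i\in[i',j)$, so $\hat q_{i'j}=\min_{i'\le\ell<j}q_{\ell j}\le q_{ij}$. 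Thus $\hat\Gamma_{i'j}\subset\Gamma_{ij}$ and $\La$ jumps to $j$ as well. If $\La$ jumps up while $\hat\La$ does not, the order is trivially kept.
\item \emph{$z$ lies in a block of different type for the two chains.} Since the blocks are disjoint, a given $z$ falls in exactly one block. So an upward move of $\hat\La$ never coincides with a downward move of $\La$.
\end{itemize}

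\emph{Conclusion.} Let $\tau$ be the first time the order is violated. Jump times do not accumulate, because all jump rates are bounded by $K_0$ and by $\max_{i'\in\S_M}\sum_{j}\hat q_{i'j}<\infty$. Hence $\tau$, if finite, is an atom of $\mathcal N$, and the case analysis above shows no such atom can break the order. This gives $\tau=\infty$ a.s., which is \eqref{e-2}.

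I expect the main obstacle to be the bookkeeping in the case analysis, especially making sure that the two chains never make opposite moves at the same atom. The disjoint block layout is exactly what is meant to rule this out. A smaller technical point is confirming that the representation with non-consecutive intervals still produces the Markov chain with generator $Q$; this follows because only the Lebesgue measures of the sets $\Gamma_{ij}$ enter the jump rates.
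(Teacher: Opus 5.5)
Your argument is correct. It follows the same overall scheme as the paper: one Poisson random measure drives both chains through Skorokhod's representation, and the order is checked atom by atom. The arrangement of the jump intervals, however, is genuinely different.

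The paper lays the downward intervals $\Gamma_{i1},\Gamma_{i2},\dots$ end to end on $[0,\infty)$ and the upward ones end to end leftward from $0$. It then compares the chains through the cumulative inclusions \eqref{e-4}--\eqref{e-5}. Under that layout, a downward jump of $\La$ below the current state of $\hat\La$ forces $\hat\La$ to a state no larger, but not necessarily the same one.

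Your fixed block per target state reduces everything to single-interval inclusions:
\begin{itemize}
\item $\Gamma_{ij}\subset\hat\Gamma_{i'j}$ for downward jumps, from the supremum in \eqref{e-1};
\item $\hat\Gamma_{i'j}\subset\Gamma_{ij}$ for upward jumps, from the minimum.
\end{itemize}
So whenever a jump of one chain would cross the other, both chains land in the same state, and the case analysis is shorter.

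Your layout has two further advantages.
\begin{itemize}
\item It makes (A3) unnecessary: each row of $Q$ has total jump measure at most $K_0$ whether or not it is finitely supported.
\item Because your random measure lives on all of $\R_+$, you never have to check that the intervals fit into a bounded window. In the paper's setup this check actually fails: the downward rates $\hat q_{ir}=\sup_{\ell\ge i}q_{\ell r}$ of one row can sum to more than $K_0$. The window $[-K_0,K_0]$ used there therefore has to be enlarged (to length of order $MK_0$) for $\hat\La$ to have the stated rates.
\end{itemize}

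What the paper's consecutive layout buys in return is that it only uses domination of partial sums:
\begin{itemize}
\item $\sum_{r\le m}\hat q_{ir}\ge\sum_{r\le m}q_{kr}$ for $m<i\le k$;
\item $\sum_{m\le r\le M}\hat q_{ir}\le\sum_{r\ge m}q_{kr}$ for $m>k\ge i$.
\end{itemize}
It would therefore also work for truncated matrices that dominate $Q$ only in this cumulative, monotone-coupling sense. For the entrywise-dominating $\hat Q$ of \eqref{e-1}, your simpler layout is enough.
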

	
	The proof of Proposition \ref{prop-3.1} follows the methodology outlined in   \cite{Sh24}, with two key distinctions that must be highlighted:
	\begin{enumerate}
		\item \textbf{Recognition of fundamental differences between upper and lower Bounds} In the previous work \cite{Sh24},  an upper bound  was successfully constructed for the random process $(\Lambda_t)$. However, at that time, we did not fully recognize the fundamental differences between the construction of upper bounds and lower bounds. We inadvertently applied a similar strategy for the lower bound, i.e. $q_{ij}^\ast =\sup_x\max_{ j<\ell\leq i}q_{\ell j}(x)$ when $j<i$ similar to the construction of upper bound. In the current paper, we have identified this important distinction. This recognition led us to provide a rigorous proof for the lower bound.
		
		\item \textbf{Simplification and removal of state space constraints} A key insight gained from re-examining the proof techniques was that the restriction on the state space (i.e. $(\Lambda_t)$ and $(\Lambda^\ast_t)$ have the same state space and started from the same point) imposed in [20] was not essential. By simplifying the proof structure and focusing on the intrinsic properties of the process, we were able to remove this state space constraint. This simplification, while  technically manageable, is  theoretically significant because it broadens the applicability of our result to a generalization of the Perron-Frobenius theorem.  The classical Perron-Frobenius theorem plays important role in many researches, but it works only for matrices of finite-order.  By projecting the infinite countable state space onto finite dimensions, we are able to leverage the well-established Perron-Frobenius theory for finite-order matrices to estimate quantities associated with Markov chains on infinite state spaces, such as exponential functionals. Moreover, our finite projection method preserves the order relation in the sense of trajectories.
	\end{enumerate}
	
	\begin{proof}
		The proof is divided into four steps.
		
		\noindent\textbf{Step 1}.(\emph{Construction of Poisson random measure}) Denote by $\bm{m}(\d x)$ the Lebesgue measure over $\R$. Let $\xi_{k}$, $k\geq 1$, be random variables taking values in $[-K_0,K_0]$ with 
		\[\p(\xi_k\in \d x)=\bm{m}(\d x)/2K_0,\]
		and $\tau_k$, $k\geq 1$, be nonnegative random variables such that $\p(\tau_k>t)=\e^{-2K_0 t}$, $t\ge 0$. $\{\xi_k\}_{k\geq 1}$ and $\{\tau_k\}_{k\geq 1}$ are mutually independent. 
		Let 
		\[\zeta_0=0,\ \zeta_n=\tau_1+\tau_2+\cdots+\tau_n,\  n\geq 1,\]
		and denote 
		\[\D_{\bm{p}}=\cup_{n\geq 1}\{\,\zeta_n\,\},\quad \text{and}\quad \bm{p}(t)=\sum_{0\leq s<t}\Delta \bm{p}(s),\]
		where $\Delta \bm{p}(s)=\bm{p}(s)-\bm{p}(s-)$ for $s\not\in\D_{\bm{p}}$, and $\Delta \bm{p}(\zeta_n)=\xi_n$ for $n\geq 1$. The finiteness of $K_0$ yields that $\lim_{n\to\infty}\zeta_n=\infty$ almost surely. 
		Let 
		\begin{equation}\label{e-3}
			\mathcal{N}_{\bm{p}}([0,t]\!\times\! A)=\#\{s\in \D_{\bm{p}};\ 0\leq s\leq t,\,\Delta\bm{p}(s)\in A\}.
		\end{equation}
		Consequently, $(\bm{p}(t))$ and $\mathcal{N}_{\bm{p}}(\d t,\d z)$ are respectively a Poisson point process and a Poisson random measure with intensity measure $\d t\bm{m}(\d z)$(cf. \cite[Chapter I]{IW}). 
		
		\noindent \textbf{Step 2}.(\emph{Construction of intervals}) We shall use the method of \cite{Sh24} to construct the desired intervals in order to apply Skorokhod's representation theorem. The key point is to construct the intervals $\Gamma_{ij}$ for $j<i$ on $[0,\infty)$, but the intervals $\Gamma_{ij}$ for $j>i$ on $(-\infty, 0]$. 
		
		For $i\in \S$, let 
		\begin{align*}
			&\Gamma_{i1}=[0,q_{i1}),\quad 
			\Gamma_{i2}=[q_{i1},q_{i1}+q_{i2}),\quad \ldots, \ \Gamma_{ii-1}=\Big[\sum_{j=1}^{i-2}q_{ij},\sum_{j=1}^{i-1} q_{ij}\Big).
		\end{align*}
		For convenience of notation, let $\Gamma_{ii}=\emptyset$.  Due to (A3), put $\Gamma_{ij}=\emptyset$ for $j-i>c_i$. Let 
		\begin{align*}
			&\Gamma_{ii+c_i}=[-q_{ii+c_i},0),\qquad  
			\Gamma_{ii+c_i-1} =[-q_{ii+c_i-1}-q_{ii+c_i},-q_{ii+c_i}),\quad \ldots,\\
			&\Gamma_{ii+1}=\Big[-\sum_{j=i+1}^{i+c_i} q_{ij},-\sum_{j=i+2}^{i+c_i} q_{ij}\Big).
		\end{align*}
		Now we construct the intervals associated with $( \hat{q}_{ij} )_{i,j\in\S_M}$ in the same way by noting that the state space is now $\S_M=\{1,2,\ldots,M\}$. For $i\in \S_M$, let 
		\begin{align*}
			\hat{\Gamma}_{i1} &= [0,\hat{q}_{i1}),\quad \hat{\Gamma}_{i2} =[\hat{q}_{i1} , \hat{q}_{i1} + \hat{q}_{i2}),\ldots, \quad 
			\hat{\Gamma}_{ii-1}  =\Big[\sum_{j=1}^{i-2}\hat{q}_{ij},\sum_{j=1}^{i-1} \hat{q}_{ij} ),
		\end{align*}
		and 
		\begin{align*}
			\hat{\Gamma}_{iN} &=[-\hat{q}_{iN},0),\quad \hat{\Gamma}_{iN-1}=[-\hat{q}_{iN-1}-\hat{q}_{iN},- \hat{q}_{iN}),\ldots,\\
			\hat{\Gamma}_{ii+1}&=\Big[-\sum_{j=i+1}^N \hat{q}_{ij} ,-\sum_{j=i+2}^N \hat{q}_{ij} \Big).
		\end{align*}
		According to the construction of these intervals, we deduce from the fact that $\hat{q}_{ir}  \leq q_{kr} $ for $i\leq k< r$,  $\hat{q}_{ir}  \geq q_{kr} $ for $r< i\leq k$  that for any  $i\in \S_M$, $k\in \S$ with $i\leq k$,
		\begin{align}\label{e-4}
			&\bigcup_{M\geq r\geq m}\hat{\Gamma}_{ir} \subseteq \bigcup_{r\geq m}\Gamma_{kr},\quad \ \ \forall\,m\geq k,\\ \label{e-5}
			&\bigcup_{1\leq r\leq m}\hat{\Gamma}_{ir} \ \supseteq \bigcup_{1\leq r\leq m} \Gamma_{kr},\quad \forall\,m\leq i.
		\end{align}
		
		\noindent\textbf{Step 3}.(\emph{Construction of coupling processes}) Let 
		\begin{align*}
			\vartheta(i,z)&=\sum_{j\in \S,j\neq i} (j-i)\mathbf1_{\Gamma_{ij}}(z),\quad \text{for $i\in \S$},\\
			\hat{\vartheta} (i,z)&=\sum_{j\in \S_M,j\neq i}(j-i)\mathbf1_{\hat{\Gamma}_{ij}}(z),\quad \text{for $i\in \S_M$}.
		\end{align*}
		Let 
		\begin{align}\label{e-6}
			\La(t)&=\La(0)+\int_0^t \int_{\R}\!\vartheta(\La(t-),z) \mathcal{N}_{\bm{p}}(\d s,\d z),\\ \label{e-7}
			\hat{\La}(t)&=\hat{\La}(0)+\int_0^t \int_{\R}\!\hat{\vartheta} (\hat{\La}(t-),z)\mathcal{N}_{ \bm{p}}(\d s,\d z),
		\end{align}
		and $\La(0)\geq\hat{\La}(0) \in \S_M$. 
		Then, according to Skorokhod's representation theorem (cf. \cite{YZ10}, \cite[Sec.2.1]{Sh24}), $(\La(t))$ and $(\hat{\La}(t))$ are continuous-time Markov chains associated with $(q_{ij})_{i,j\in \S}$ and $(\hat{q}_{ij} )_{i,j\in \S_M}$ respectively. 
		
		\noindent\textbf{Step 4}.(\emph{Comparison in the pathwise sense})
		Due to \eqref{e-6} and \eqref{e-7}, $(\La(t))$ and $(\hat{\La} (t))$ have no jumps outside of $\D_{\bm{p}}$, so it is sufficient to show that 
		\[\p(\hat{\La} (t)\leq \La(t),\ t\in \D_{\bm{p}})=1.\]
		Assume that $i= \hat{\La}(\zeta_{n-1})\leq \La(\zeta_{n-1})=k$ for some $n\geq 1$, we go to prove that 
		\[\hat{\La} (\zeta_n)\leq \La(\zeta_n),\]
		whose proof is divided into three cases.
		\begin{itemize}
			\item[(i)] If for some $j\in \S_M$, $j\geq k$, $\hat{\La}  (\zeta_n)=j$, which implies that $k\in \S_M$ too. Then, by \eqref{e-7}, we have $\xi_n\in \hat{\Gamma}_{ij} \subset (-\infty, 0]$, and it follows from \eqref{e-4} that $\xi_n\in \bigcup_{r\geq j}\Gamma_{kr}$. Hence, by \eqref{e-6}, we have $\La(\zeta_n)\geq j=\hat{\La} (\zeta_n)$. 
			\item[(ii)] If for some $m\in \S_M$ with $i<m<k$, $\hat{\La} (\zeta_n)=m$. Then, $\xi_n\in \hat{\Gamma}_{im} $ thanks to \eqref{e-7}, and hence $\xi_n<0$. It is clear that $\xi_n\not\in \bigcup_{j\leq k}\Gamma_{kj}$ as $\bigcup_{j\leq k} \Gamma_{kj}\subset [0,\infty)$. So, 
			\[\La(\zeta_n)>k>\hat{\La} (\zeta_n).\]
			\item[(iii)] If $\hat{\La}(\zeta_n)=j\leq i$, we only need to study the situation that for some $m'<i$, $\La(\zeta_n)=m'$. Otherwise, it holds immediately that $\hat{\La}(\zeta_n)\leq \La(\zeta_n)$.  When $\La(\zeta_n)=m'$, then $\xi_n\in \Gamma_{km'}$ by \eqref{e-6}. Due to \eqref{e-5}, $\Gamma_{km'}\subset \bigcup_{r\leq m'} \hat{\Gamma}_{ir} $, and it follows from \eqref{e-7} that $\hat{\La} (\zeta_n)\leq m'$. Therefore, $\hat{\La} (\zeta_n)\leq \La(\zeta_n)$. 
			
		\end{itemize}
		In all, by induction on $n$  we can show that 
		\[\p(\hat{\La} (t)\leq \La(t),\ \forall\,t\geq 0)=1,\]
		and the proof is complete. 
	\end{proof}
	
	\begin{mylem}\label{lem-3.1}
		Assume that $\mathrm{(A1)}$-$\mathrm{(A3)}$ hold. Let $(\La(t))$ and $(\hat{\La}(t))$ be the coupling process constructed in Proposition \ref{prop-3.1} with $\S\ni\La(0)\geq\hat{\La}(0)\in \S_M$ for some $M\in \N$. Suppose that $\hat{Q}$ is irreducible. Let $\theta:\S\to\R$ be a decreasing function. Let
		\begin{equation}\label{e-7.5}
			\begin{aligned}
				\wt{Q}_M& =\hat{Q}+ \mathrm{diag}(\theta_1,\theta_2,\ldots,\theta_M),\\
				\eta_1(\wt{Q}_M)&=-\max\big\{ \mathrm{Re}(\gamma);\,\gamma\in \mathrm{Spec}(\wt{Q}_M)\big\}.
			\end{aligned}
		\end{equation}
		Then there exists a constant $c_3>0$ such that 
		\begin{equation}\label{e-8}
			\E\Big[\e^{\int_0^t\theta_{\La(s)}\d s}\Big]\leq \E\Big[\e^{\int_0^t \theta_{\hat{\La}(s)}\d s}\Big] \leq c_3 \e^{-\eta_1(\wt{Q}_M)t},\quad t\geq 0.
		\end{equation}
	\end{mylem}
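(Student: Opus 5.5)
The first inequality in \eqref{e-8} will come from the pathwise comparison of Proposition \ref{prop-3.1}, and the second from Lemma \ref{lem-2.4} applied to the finite chain $(\hat{\La}(t))$ on $\S_M$.

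\emph{First inequality.} Proposition \ref{prop-3.1} gives a coupling with $\hat{\La}(t)\leq \La(t)$ for all $t\geq 0$, almost surely. Since $\theta$ is decreasing, this yields $\theta_{\La(s)}\leq \theta_{\hat{\La}(s)}$ for every $s$, almost surely. Integrating over $[0,t]$ and using that the exponential is increasing,
\[
\e^{\int_0^t\theta_{\La(s)}\d s}\leq \e^{\int_0^t\theta_{\hat{\La}(s)}\d s}\quad \text{a.s.}
\]
Taking expectations gives the first inequality. The law of $(\La(t))$ under the coupling is that of a Markov chain with rate matrix $Q$ started at $\La(0)$, so the left-hand side is the quantity we want.

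\emph{Second inequality.} First, $\hat{Q}$ is a genuine conservative rate matrix on the finite set $\S_M$. Its off-diagonal entries are finite: for $j<i$ we have $\hat q_{ij}\leq K_0$ by (A1), and for $j>i$ the minimum is over finitely many indices. Together with the assumed irreducibility of $\hat{Q}$, this puts $(\hat{\La}(t))$ within the scope of Lemma \ref{lem-2.4}, with $\theta$ restricted to $\S_M$; the matrix $\wt{Q}$ there is then exactly $\wt{Q}_M$. Lemma \ref{lem-2.4} gives
\[
\E_{i'}\big[\e^{\int_0^t\theta_{\hat{\La}(s)}\d s}\big]\leq c_2\e^{-\eta_1(\wt{Q}_M)t}\quad\text{for each } i'\in\S_M .
\]
Because $\S_M$ is finite, taking the maximum of these constants over the initial states gives a uniform $c_3$. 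For $t=0$ both sides are at most $\max(1,c_3)$, so we enlarge $c_3$ if needed.

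\emph{Main obstacle.} The delicate point is checking that Lemma \ref{lem-2.4} legitimately applies to the truncated chain. This needs irreducibility, which is assumed. It also needs that the upper bound in \cite{BGM} does not secretly depend on reversibility or symmetry; as stated, it holds for general finite $Q$. A second point is that the expectation of the exponential functional of $\hat{\La}$ computed under the coupling coincides with that of a $\hat{Q}$-chain. This holds because the marginal $(\hat{\La}(t))$ is, by Step 3 of Proposition \ref{prop-3.1}, a Markov chain with generator $\hat{Q}$.
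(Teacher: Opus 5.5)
Your proposal is correct and follows the paper's own proof. The pathwise ordering $\hat\Lambda(t)\le\Lambda(t)$ from Proposition~\ref{prop-3.1}, combined with the monotonicity of $\theta$, gives the first inequality, and Lemma~\ref{lem-2.4} applied to the irreducible finite chain $(\hat\Lambda(t))$ on $\mathcal{S}_M$ gives the second; your extra checks (finite entries of $\hat Q$, uniformity of the constant in the initial state, $c_3\ge 1$ at $t=0$) are details the paper leaves implicit.
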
 
	
	\begin{proof}
		By Proposition \ref{prop-3.1}, 
		\[\hat{\La}(t)\leq \La(t),\quad t\geq 0,\quad \text{a.s.}.\]
		By the decreasing property of the function $\theta(\cdot)$, it holds
		\[\theta_{\La(t)} \leq \theta_{\hat{\La}(t)},\quad t\geq 0,\ \text{a.s.}. \]
		Applying Lemma \ref{lem-2.4} to the Markov chain $(\hat{\La}(t))$ on the finite state space $\S_M$, there is a constant $c_3>0$ such that \eqref{e-8} holds, and hence the proof is complete. 
	\end{proof}
	
	\begin{mylem}\label{lem-3.2}
		Assume that $\mathrm{(A1)}$-$\mathrm{(A3)}$, $\mathrm{(H1)}$-$\mathrm{(H3)}$ hold with $(\alpha_i)_{i\in\S}$ being bounded. For $M\in \N$, let $(\La(t),\hat{\La}(t))$ be the coupling process constructed in Proposition \ref{prop-3.1}. Suppose that $\hat{Q}$ is irreducible. Let $(X^{\xi,\ell}(t),\La(t))$ be the solution to \eqref{a-1} with $X_0^{\xi,\ell}=\xi$ and $\La(0)=\ell(0)\in \S$ for $(\xi,\ell)\in \C\times \D$. Suppose that the function 
		\[i\mapsto \alpha_i+\beta_i\int_{-\tau_0}^0\!\! \e^{\hat{\alpha} \theta } \mu(\d \theta) \] is a decreasing function, where $\hat{\alpha}=\inf_{i\in \S}\alpha_i>-\infty$. Then there is $c>0$ such that for $\xi,\eta\in \C$, $\ell\in \D$, 
		\begin{align*}
			\E\big[ \|X_t^{\xi,\ell}-X_t^{\eta,\ell}\|_\infty^2\big]&\leq c\|\xi-\eta\|_\infty^2\e^{-\eta_1(\wt{Q}_M) t},\quad t>0,\\
			\E\big[\|X^{\xi,\ell}_t\|_\infty^2\big]&\leq c(1+\|\xi\|_\infty^2)\e^{-\eta_1(\wt{Q}_M)t},\quad t>0,
		\end{align*} where $\wt{Q}_M,\,\eta_1(\wt{Q}_M)$ are defined as in \eqref{e-7.5} by taking  
		$
		\theta_i=\alpha_i+\beta_i\int_{-\tau_0}^0\!\! \e^{\hat{\alpha}r} \mu(\d r)$ for $i\in\S.$
	\end{mylem}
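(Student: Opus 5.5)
The plan is to rerun the proof of Lemma \ref{lem-2.2}, with Lemma \ref{lem-2.4} replaced by Lemma \ref{lem-3.1}. Set $\Gamma(t)=X^{\xi,\ell}(t)-X^{\eta,\ell}(t)$. Apply It\^o's formula and (H1), then take the conditional expectation $\E_{\p_1}$; the independence built into \eqref{b-0} lets us freeze the path of $(\La(t))$. The chain of inequalities \eqref{c-5.0} then goes through verbatim. The only finite-state features used there were $\hat\alpha=\min_i\alpha_i$ and $\check\beta=\max_i\beta_i$. In the present setting $\hat\alpha=\inf_i\alpha_i>-\infty$, since $(\alpha_i)$ is bounded.

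For $\check\beta$, I note that $\theta_i=\alpha_i+\beta_i\int\e^{\hat\alpha r}\mu(\d r)$ is decreasing and $\alpha_i$ is bounded. Hence $\beta_i\int\e^{\hat\alpha r}\mu(\d r)\le \theta_1-\hat\alpha$, and $\int\e^{\hat\alpha r}\mu(\d r)>0$. So $\check\beta:=\sup_i\beta_i<\infty$, and every constant in \eqref{c-5.0}--\eqref{c-5.3} remains finite. Gronwall's inequality and the bound on $\int_0^t\Upsilon(s)\d s$ then give \eqref{c-5.3} pathwise in $\La$:
\[
\E_{\p_1}|\Gamma(t)|^2\le c\|\Gamma_0\|_\infty^2\exp\Big(\int_0^t\theta_{\La(s+\epsilon)}\d s\Big).
\]

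Next, take the full expectation. Since $\epsilon\le 0$, split $\int_0^t=\int_\epsilon^{0}+\int_0^{t+\epsilon}$ after shifting. The first piece is over $[\epsilon,0]$ and depends only on the deterministic initial path $\ell$. It is bounded by $\sup_i|\theta_i|\,\tau_0<\infty$, where $\theta_i$ is bounded because $\alpha_i,\beta_i$ are bounded. For the second piece, the process $(\La(s))_{s\ge0}$ starts at $\ell(0)$. Construct the coupling of Proposition \ref{prop-3.1} with $\hat\La(0)=\min\{\ell(0),M\}\le \ell(0)$. Lemma \ref{lem-3.1} then gives $\E\exp(\int_0^{t+\epsilon}\theta_{\La(s)}\d s)\le c_3\e^{-\eta_1(\wt Q_M)(t+\epsilon)}$. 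The factor $\e^{-\eta_1\epsilon}\le\e^{|\eta_1|\tau_0}$ is absorbed into $c$. For $t<-\epsilon$ the bound is trivial with a constant. One point needs care: $c_3$ comes from Lemma \ref{lem-2.4} applied to $\hat\La$ on the finite set $\S_M$, and it can be chosen uniformly over the initial states $i\in\S_M$. This gives $\E|\Gamma(t)|^2\le c\|\xi-\eta\|_\infty^2\e^{-\eta_1(\wt Q_M)t}$.

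Finally, upgrade to the sup-norm over the segment $[t-\tau_0,t]$. I use \eqref{c-5.5}, obtained from It\^o's formula, BDG and the $\kappa_1$-bound in (H1). Insert the pointwise bound and integrate over a window of length $2\tau_0$, which costs only a factor $\e^{|\eta_1|\tau_0}$. For $t\le\tau_0$ the segment includes initial data, which contributes $\|\xi-\eta\|_\infty^2$.

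The second estimate follows by the same argument with $\Gamma$ replaced by $X^{\xi,\ell}$, using (H2). Alternatively, write $X^{\xi,\ell}=(X^{\xi,\ell}-X^{0,\ell})+X^{0,\ell}$; the "$1+$" then absorbs the inhomogeneous terms. I expect the main obstacle to be bookkeeping rather than new ideas: keeping all the constants uniform in the infinite state space, namely the finiteness of $\sup\beta_i$ and $\sup|\theta_i|$ and the uniformity of $c_3$ in the initial state.
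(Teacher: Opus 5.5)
Your proposal is correct and is the paper's argument: rerun Lemma \ref{lem-2.2} with Lemma \ref{lem-3.1} (pathwise order $\hat\La\le\La$ plus monotonicity of $\theta$) in place of Lemma \ref{lem-2.4}, and you correctly supply the bookkeeping the paper omits, namely $\sup_i\beta_i<\infty$ from the monotonicity of $\theta$, the $\epsilon$-shift, and the uniformity of $c_3$ in the initial state. For the second bound, (H2) at $\xi=0$ forces $b(0,\cdot)=0$ and $\sigma(0,\cdot)=0$, so $X^{0,\ell}\equiv 0$ and the estimate is just the first one with $\eta=0$; there are no inhomogeneous terms for the ``$1+$'' to absorb.
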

	
	\begin{proof}
		This lemma can be proved along the same line as that of Lemma \ref{lem-2.2}.  In the notation there, the key point is the deduction of \eqref{c-5.4} from \eqref{c-5.3}. Instead of using Lemma \ref{lem-2.4}, we shall use Lemma \ref{lem-3.1} to deduce that 
		\begin{equation*}
			\begin{split}
				\E_{\p_1}\big[|\Gamma(t)|^2\big]&\leq c\|\Gamma_0\|_\infty^2 \exp\Big(\int_0^t\Big(\alpha_{\La(s+\epsilon)}\!+\!\beta_{\La(s+\epsilon)} \int_{-\tau_0}^0\!\!  \e^{\hat{\alpha}\theta}\mu(\d \theta)\Big) \d s \Big)\\
				&\leq c\|\Gamma_0\|_\infty^2\exp\Big(\int_0^t \Big(\alpha_{\hat{\La}(s+\epsilon)}\!+\!\beta_{\hat{\La}(s+\epsilon)} \int_{-\tau_0}^0\!\!  \e^{\hat{\alpha}\theta}\mu(\d \theta)\Big) \d s \Big).
			\end{split}
		\end{equation*} 
		where we used the decreasing property of the function $i\mapsto \alpha_i+\beta_i\int_{-\tau_0}^0\!\! \e^{\hat{\alpha} \theta } \mu(\d \theta)$. More details are omitted. 
	\end{proof}

	\begin{mythm}\label{main-2}
		Assume that $\mathrm{(A1)}$-$\mathrm{(A3)}$, $\mathrm{(H1)}$-$\mathrm{(H3)}$ hold with $(\alpha_i)_{i\in \S}$ being bounded. Suppose there is some $M\in \N$ such that the induced transition rate matrix $\hat{Q}_M$ defined by \eqref{e-1} is irreducible and the induced $\wt{Q}_M$ by \eqref{e-7.5} with $\theta_i=\alpha_i+\beta_i\int_{-\tau_0}^0\!\!\e^{\hat{\alpha} r}\mu(\d r)$ satisfies $\eta_1(\wt{Q}_M)>0$.  Suppose that the function $i\mapsto \alpha_i+\beta_i\int_{-\tau_0}^0\!\! \e^{\hat{\alpha} r} \mu(\d r)$  is a decreasing function, where $\hat{\alpha}=\inf_{i\in\S}\alpha_i>-\infty$. Then, there is a $c>0$ such that for any $\nu_1,\,\nu_2\in \pb_1(\C\times\D)$, for any $t>(1+\eta_1(\wt{Q}_M)/\lambda)\tau_0$, 
		\begin{equation}\label{e-10}
			\W_1(\nu_1P_t,\nu_2P_t)\leq c\Big(1+\int_{\C}\!\|\xi\|_\infty\nu_1(\d \xi,\D)\!+\!\int_{\C}\!\|\eta\|_\infty\nu_2(\d \eta,\D)\Big)\e^{-\frac{\lambda \eta_1(\wt{Q}_M)}{2(\lambda+\eta_1(\wt{Q}_M))}},
		\end{equation}
		where $\lambda\in (0, \|V\|_\infty^{-1})$ is a constant. Moreover, there is a unique invariant probability measure $\nu_\infty\in \pb_1(\C\times\D)$ such that for any $t>(1+\eta_1(\wt{Q}_M)/\lambda)\tau_0$,
		\begin{equation}\label{e-11}
			\W_1(\delta_{(\xi,\ell)}P_t,\nu_\infty)\leq c\Big(1+\|\xi\|_\infty\!+\!\int_{\C}\|\eta\|_\infty\nu_\infty(\d \eta,\D)\Big)\e^{-\frac{\lambda \eta_1(\wt{Q}_M)t}{2(\lambda+\eta_1(\wt{Q}_M))}}
		\end{equation}
		for any $\xi\in \C$ and $\ell\in \D$. 
	\end{mythm}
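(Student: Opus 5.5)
The plan is to reproduce the proof of Theorem \ref{main-1} essentially verbatim, with Lemma \ref{lem-3.2} taking the place of Lemma \ref{lem-2.2} and $\eta_1(\wt{Q}_M)$ taking the place of $\eta_1$. The one genuinely new ingredient is the Feller property of $P_t$ when $\S$ is infinite. I would handle it first.

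\textbf{Feller property.} I will recheck the proof of Lemma \ref{lem-2.3}. The finiteness of $\S$ enters only through the exponential estimates of Lemma \ref{lem-2.2}.
\begin{itemize}
\item The reduction $\ell_m(0)=\ell(0)$ for large $m$ uses only the discrete metric $\vartheta$, so it remains valid for $N=\infty$.
\item After this reduction, $\La^{\ell_m}_t=\La^{\ell}_t$ for $t\ge\tau_0$.
\item The Gronwall estimate on $[0,\tau_0]$ uses only the bounds $\hat\alpha=\inf_i\alpha_i>-\infty$ and $\check\alpha<\infty$ (assumed: $(\alpha_i)$ bounded), together with the bound on $\check\beta$. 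Here I must be careful: if $(\beta_i)$ is unbounded, I would instead bound $\int_0^t\Upsilon(s)\d s$ on the finite interval $[0,\tau_0]$ using the decreasing property of $\theta_i$. Since $\theta_i\le\theta_1$ and $\alpha_i\ge\hat\alpha$, this gives $\beta_i$ bounded whenever $\int_{-\tau_0}^0 \e^{\hat\alpha r}\mu(\d r)>0$, and that integral is always positive. So $\check\beta<\infty$ follows automatically.
\item For $t\ge\tau_0$, I replace Lemma \ref{lem-2.2} by Lemma \ref{lem-3.2} in \eqref{c-6.1}.
\end{itemize}
Thus the proof of \eqref{c-6.3} goes through, and $P_t$ is Feller.

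\textbf{Contraction estimate.} Fix $(\xi,\ell)$ and $(\eta,\tilde\ell)$. I will use the coupling $(\La(t),\tilde\La(t))$ from (A2), whose coupling time $T$ satisfies \eqref{b-4}. Take $\beta\in(0,1)$ and $t>\tau_0/(1-\beta)$, and split on $\{T\le\beta t\}$ and $\{T>\beta t\}$.
\begin{itemize}
\item On $\{T\le \beta t\}$, the segments satisfy $\La_t=\tilde\La_t$. Conditioning on $\F_T$ and applying the strong Markov property of the segment process, I use the first estimate of Lemma \ref{lem-3.2} started at time $T$ from the pair $X_T^{\xi,\ell},X_T^{\eta,\tilde\ell}$ sharing the same $\La_T$. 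Its constant is uniform in $\ell$, because Lemma \ref{lem-3.1} is uniform over initial states (the comparison chain can start at $\hat\La(0)=\min(\La(0),M)$). This gives the factor $\e^{-\frac12(1-\beta)\eta_1(\wt Q_M)t}$.
\item For the second moments of $X_T$ and $X_t$, I use the second estimate of Lemma \ref{lem-3.2}. Since $\eta_1(\wt Q_M)>0$, these are bounded by $c(1+\|\xi\|_\infty^2)$.
\item On $\{T>\beta t\}$, I bound $\rho\le 1$, apply Cauchy–Schwarz, and use Chebyshev with \eqref{b-4} to get $\p(T>\beta t)\le \e^{-\lambda\beta t}/(1-\lambda\|V\|_\infty)$.
\end{itemize}
Choosing $\beta=\eta_1(\wt Q_M)/(\lambda+\eta_1(\wt Q_M))$ yields the analogue of \eqref{d-3}. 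Integrating against any coupling $\pi\in\mathcal C(\nu_1,\nu_2)$ then gives \eqref{e-10}.

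\textbf{Invariant measure.} With \eqref{e-10} and the Feller property in hand, the remainder is identical to Theorem \ref{main-1}:
\begin{itemize}
\item $(\nu P_{nt_0})_{n\ge1}$ is Cauchy in $(\pb_1,\W_1)$, so it has a limit $\nu_\infty$.
\item By the Kantorovich dual \eqref{b-7}, the limit does not depend on $\nu$.
\item Invariance follows from the commutation $P_{nt_0}P_s=P_sP_{nt_0}$ and the Feller property.
\item Uniqueness and \eqref{e-11} follow by taking $\nu_2=\nu_\infty$.
\end{itemize}
For this step I need $\nu P_{t_0}\in\pb_1$, which again follows from the moment bound in Lemma \ref{lem-3.2}.

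\textbf{Main obstacle.} I expect the main difficulty to be the uniformity of the constants in Lemma \ref{lem-3.2} (through $c_3$ of Lemma \ref{lem-3.1}) with respect to the random restart state $\La(T)\in\S$. Lemma \ref{lem-2.4} gives a constant for each initial state in the finite space $\S_M$. I would handle this by coupling from $\hat\La(0)=\min(\La(T),M)\le\La(T)$, so only finitely many initial states $1,\dots,M$ of $\hat\La$ ever occur, and taking the maximum of the constants.
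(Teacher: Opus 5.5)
Your overall route is the paper's: its proof of Theorem \ref{main-2} simply says to rerun the proof of Theorem \ref{main-1} with $\eta_1(\wt{Q}_M)$ in place of $\eta_1$ (that is, with Lemma \ref{lem-3.2} in place of Lemma \ref{lem-2.2}). Your extra checks are correct and fill in what the paper leaves implicit: the Feller property for $N=\infty$, $\check\beta<\infty$ from the monotonicity of $\theta$ together with boundedness of $(\alpha_i)$, and uniformity of constants via $\hat\La(0)=\min(\La(0),M)$.

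However, the coupling step fails as you wrote it. At the coupling time $T$ only the current states agree, $\La(T)=\tilde\La(T)$. The segments $\La_T$ and $\tilde\La_T$ generally still differ on $[T-\tau_0,T)$. Since $b$ and $\sigma$ depend on the whole segment (already through $\ell(\epsilon)$ in (H1)), the first estimate of Lemma \ref{lem-3.2}, which requires a common $\ell$, cannot be started at $T$. In addition, Lemma \ref{lem-3.2} controls $\E\|X_t\|_\infty^2$ only at deterministic times $t$. So it does not give you $\E\|X_T\|_\infty^2\le c(1+\|\xi\|_\infty^2)$ at the random time $T$, which correlates with the switching path.

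The repair is to restart at the deterministic time $s=\beta t+\tau_0$, which is $<t$ because $t>\tau_0/(1-\beta)$. The event $\{T\le\beta t\}$ is $\F_s$-measurable, and on it the segments satisfy $\La_r=\tilde\La_r$ for all $r\ge s$. Conditioning on $\F_s$ and applying Lemma \ref{lem-3.2} with your uniform constant gives
\[\E\big[\|X_t^{\xi,\ell}-X_t^{\eta,\tilde\ell}\|_\infty\mathbf 1_{\{T\le\beta t\}}\big]\le c^{1/2}\e^{\frac12\eta_1(\wt{Q}_M)\tau_0}\,\e^{-\frac12(1-\beta)\eta_1(\wt{Q}_M)t}\,\E\big[\|X_s^{\xi,\ell}-X_s^{\eta,\tilde\ell}\|_\infty\big].\]
The last expectation is at most $C(1+\|\xi\|_\infty+\|\eta\|_\infty)$ by the second estimate of Lemma \ref{lem-3.2} at the deterministic time $s$. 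With this change, the rest of your plan goes through unchanged: the Cauchy--Schwarz bound on $\{T>\beta t\}$, the choice $\beta=\eta_1(\wt{Q}_M)/(\lambda+\eta_1(\wt{Q}_M))$, and the Cauchy-sequence, invariance and uniqueness arguments. Note that the written proof of Theorem \ref{main-1}, which the paper tells you to copy, restarts at $T$ in the same way, so the same fix is needed there.
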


	\begin{proof}
		This theorem can be proved in the same way as that of Theorem \ref{main-1} by replacing $\eta_1$  with $\eta_1(\wt{Q}_M)$. The details are omitted. 
	\end{proof}
	
	\begin{myrem}\label{rem-3}
		In Lemma \ref{lem-3.2} and Theorem \ref{main-2}, the decreasing condition of $i\mapsto \alpha_i+\beta_i\int_{-\tau_0}^0\!\! \e^{\hat{\alpha}\theta} \mu(\d\theta)$ is a technical condition to simplify our statement of results. One can delete this condition by relabel the state space $\S$ so that $i\mapsto \alpha_i+\beta_i\int_{-\tau_0}^0\!\! \e^{\hat{\alpha}\theta} \mu(\d\theta)$ becomes decreasing. In such situation, one needs to redefine $Q$ and $\wt{Q}_M$ accordingly. 
	\end{myrem}
	
	Let us present a simple example on the truncation of a Markov chain on $\S=\{1,2,\ldots\}$ to a finite state space constructed in Proposition \ref{prop-3.1}. 
	\begin{myexam}\label{exam-2}
		Consider a birth-death process $(\La(t))$ on $\S=\{1,2,\ldots\}$ with $q_{ii+1}=b_i>0$ for $i\geq 1$, $q_{ii-1}=a_i>0$ for $i\geq 2$, and $q_{ij}=0$ for all $i,j$ with $|i-j|>1$, $q_i=-q_{ii}=\sum_{j\neq i} q_{ij}$. We take $M=3$ and $\S_M=\{1,2,3\}$. Then, according to the definition of \eqref{e-1}, we get
		\begin{equation*}
			\hat{Q}=\begin{pmatrix} -b_1 &b_1 &0 \\ a_2&-(a_2+b_2) & b_2\\ 0& a_3 &-a_3
			\end{pmatrix}.
		\end{equation*}
		Based on $\hat{Q}$, due to Proposition \ref{prop-3.1}, there is a coupling process $(\La(t),\hat{\La}(t))$ such that 
		\[\p\big(\hat{\La}(t)\leq \La(t),\  t\geq 0\big)=1.\]
		As a consequence, we can further apply Theorem \ref{main-2} to study the existence and uniqueness of SFDEs with Markovian regime-switching on an infinite state space. 
	\end{myexam}
	

	\noindent\textbf{Acknowledgement.} The authors are grateful to the referees for their valuable comments, which have improved the quality of this work. 
	This work is supported in part by National Key R\&D Program of China (No. 2022YFA1006000) and NNSFs of China (No. 12271397, 12531007).


\begin{thebibliography}{17}
		
		\footnotesize
		
		%
		%
		
		
		
		\bibitem{And} { \sc W.  Anderson} (1991). \emph{Continuous-time Markov chains}, Springer-Verlag, New York. 
		
		
		\bibitem{BB92} { \sc G. Basak, R. Bhattacharya} (1992). Stability in distribution for a class of singular diffusions, \emph{Ann. Probab.} 20, 312-321. 
		
		\bibitem{BM05} { \sc Y. Bakhtin,  J. Mattingly} (2005).  Stationary solutions of stochastic differential equations with memory and stochastic partial differential equations, \emph{Commun. Contemp. Math.} 7, 553-582. 
		
		\bibitem{BSY23} { \sc J. Bao, J. Shao, C. Yuan} (2023). Invariant probability measures for path-dependent random diffusions, \emph{Nonlinear Analysis} 228, 11320. 
		
		\bibitem{BGM} { \sc J. Bardet, H. Guerin, F. Malrieu} (2010). Long time behavior of diffusions with Markov switching, \emph{ALEA Lat. Am. J. Probab. Math. Stat.}  7, 151-170. 
		
		\bibitem{BS17} { \sc O. Butkovsky,  M. Scheutzow} (2017).  Invariant measures for stochastic functional differential equations, \emph{Electron. J. Probab.} 22, 1-23. 
		
		\bibitem{Chen} { \sc M. Chen} (2004). \emph{From Markov chains to non-equilibrium particle systems}, 2nd ed. Singapore: World Scientific.
		
		\bibitem{Chen05} { \sc M. Chen} (2005). \emph{Eigenvalues, inequalities, and ergodic theory}, Springer-Verlag London Limited. 
		
		\bibitem{EGS} { \sc A. Es-Sarhir, O. van Gaans,  M. Scheutzow} (2010).  Invariant measures for stochastic functional differential equations with superlinear drift term, \emph{Differential and Integral Equations}, 23, 189-200. 
		
		\bibitem{EK86} { \sc S.N. Ethier, T.G. Kurtz} (1986). \emph{Markov processes: Characterization and convergence}, John Wiley \& Sons, New York. 
		
		\bibitem{IW} { \sc N. Ikeda, S. Watanabe}  (1989).  \emph{Stochastic differential equations and diffusion processes}, North-Holland.
		
		\bibitem{LMMY} { \sc X. Li,  X. Mao, D. Mukama, C. Yuan} (2020). Delay feedback control for switching diffusion systems based on discrete time observations, \emph{SIAM J. Control Optim.} 58, 2900-2926.
		
		\bibitem{LSZ} { \sc J. Li, J. Shao, X. Zhao} (2026).  Stabilization of regime-switching processes based on discrete time observations: existence of invariant probability measure, \emph{J. Math. Anal. Appl.} 553, no. 1, 129838.
		
		\bibitem{Mao} { \sc X. Mao} (2013). Stabilization of continuous-time hybrid stochastic differential equations by discrete-time feedback control, \emph{Automatica}, 49, 3677-3681. 
		
		\bibitem{MY} { \sc X. Mao, C. Yuan} (2006).  \emph{Stochastic differential equations with Markovian switching}, Imperial College Press, London.
		
		\bibitem{NNN} { \sc D.H. Nguyen, D. Nguyen, S.L. Nguyen} (2021).  Stability in distribution of path-dependent hybrid diffusion, \emph{SIAM J. Control Optim.} 59, 434-463. 
		
		\bibitem{RRG} { \sc M. Reiß, M. Riedle, O. van Gaans} (2006). Delay differential equations driven by L\'evy processes: stationarity and Feller properties, \emph{Stoch. Process. Appl.} 116, 1409-1432. 
		
		\bibitem{Sh15c} { \sc J. Shao} (2015). Strong solutions and strong Feller propertieds for regime-switching diffusion processes in an infinite state space, \emph{SIAM J. Control Optim.} 53, 2462-2479. 
		
		\bibitem{Sh17} { \sc J. Shao} (2017). Stabilization of regime-switching processes by feedback control based on discrete time observations, \emph{SIAM J. Control Optim.} 55, 724-740. 
		
		\bibitem{Sh24} { \sc J. Shao} (2024). Comparison theorem and stability under perturbation of transition rate matrices for regime-switching processes, \emph{J. Appl. Probab.} 61, 540-557. 
		
		\bibitem{SZ21} { \sc J. Shao, K. Zhao} (2021). Continuous dependence for stochastic functional differential equations with state-dependent regime-switching on initial values, \emph{Acta Math. Sinica, Eng. Ser.} 37, 389-407. 
		
		\bibitem{SWW} { \sc B. Shi, Y. Wang, F. Wu} (2022). Ergodicity of regime-switching functional diffusions with infinite delay and application to a numerical algorithm for stochastic optimization, 
		\emph{SIAM J. Control Optim.} 60, 2650-2683.
		
		\bibitem{SZLM} { \sc G. Song, B. Zheng, Q. Luo, X. Mao} (2017). Stabilization of hybrid stochastic differential equations by feedback control based on discrete-time observations of state and mode, \emph{IET Control Theory Appl.} 11, 301-307.  
		
		\bibitem{Vil} { \sc C. Villani} (2009). \emph{Optimal transport, old and new}, in: Grundlehren der mathematischen Wissenschaften, vol. 338, Springer Berlin Heidelberg. 
		
		\bibitem{RS10} { \sc M. von Renesse, M. Scheutzow} (2010). Existence and uniqueness of solutions of stochastic functional differential equations, \emph{Random Oper. Stoch. Equ.} 18, 267-284. 
		
		\bibitem{WWY} { \sc Y. Wang, F. Wu, G. Yin} (2023). Limit theorems of additive functionals for regime-switching diffusions with infinite delay, \emph{Stoch. Process. Appl.} 167, 104215.
		
		\bibitem{WWZ} { \sc Y. Wang, F. Wu, C. Zhu} (2024). Large deviations for regime-switching diffusions with infinite delay, \emph{Stoch. Process. Appl.} 176, 104418. 
		
		\bibitem{YZ10} { \sc G. Yin,  C. Zhu} (2010).  \emph{Hybrid switching diffusions: Properties and Applications} (Stoch. Model. Appl. Prob. 63), Springer, New York.	
		
		\bibitem{YLL} { \sc S. You, W. Liu, J. Lu, X. Mao, Q. Qiu} (2015). Stabilization of hybrid systems by feedback control based on discrete-time state observations, \emph{SIAM J. Control Optim.} 53, 905-925.
		
		\bibitem{YM03} { \sc C. Yuan, X. Mao} (2003).  Asymptotic stability in distribution of stochastic differential equations with Markovian switching, \emph{Stoch. Proc. Appl.} 103, 277-291. 
		
		\bibitem{YZM} { \sc C. Yuan, J. Zou, X. Mao} (2003).  Stability in distribution of stochastic differential delay equations with Markovian switching, \emph{Systems \& Control Letters} 50, 195-207. 
		
	\end{thebibliography}
\end{document}